\newcommand{\CC}{{\mathbb C}}
\newcommand{\RR}{{\mathbb R}}
\newcommand{\ZZ}{{\mathbb Z}}
\newcommand{\NN}{{\mathbb N}}
\newcommand{\TT}{{\mathbb T}}
\newcommand{\DD}{{\mathbb D}}
\newcommand{\II}{{\mathbb I}}
\newcommand{\HP}{{\mathbb H}}
\newcommand{\HH}{{\mathcal H}}
\newcommand{\Z}{{\mathcal Z}}
\newcommand{\W}{{\mathcal W}}
\newcommand{\PP}{{\mathcal P}}
\newcommand{\PK}{{\mathcal P}_n(K)}
\newcommand{\DK}{\partial K}
\newcommand{\al}{\alpha}
\newcommand{\as}{\alpha^{*}}
\newcommand{\aw}{\widetilde{\alpha}}
\newcommand{\aws}{\widetilde{\alpha^{*}}}
\newcommand{\gs}{\gamma^{*}}
\newcommand{\gw}{\widetilde{\gamma}}
\newcommand{\gws}{\widetilde{\gamma^{*}}}
\newcommand{\ze}{\zeta}
\newcommand{\bnu}{\boldsymbol{\nu}}
\newcommand{\bmu}{\boldsymbol{\mu}}
\newcommand{\bnun}{{\boldsymbol{\nu}}_n}
\newcommand{\bmun}{\boldsymbol{\mu}_n}
\newcommand{\Norm}[1]{{\left\|{#1}\right\|}}
\newcommand{\Inner}[2]{{\left\langle #1, #2 \right\rangle}}
\newcommand{\capa}{\mathop{\mathrm{cap}}}
\newcommand{\intt}{\mathop{\mathrm{int}}}
\newcommand{\diam}{\mathop{\mathrm{diam}}}
\newcommand{\width}{\mathop{\mathrm{width}}}
\newcommand{\dist}{\mathop{\mathrm{dist}}}
\newcounter{othm}
\def\theothm{\Alph{othm}}
\newenvironment{othm}{
  \em
  \vskip 0.10in
  \refstepcounter{othm}
  \noindent{\bf Theorem\ \theothm}
}{\vskip 0.10in}
\newenvironment{olemma}{
  \em
  \vskip 0.10in
  \refstepcounter{othm}
  \noindent{\bf Lemma\ \theothm}
}{\vskip 0.10in}
\newtheorem{theorem}{Theorem}
\newtheorem{corollary}{Corollary}
\newtheorem{lemma}{Lemma}
\newtheorem{proposition}{Proposition}
\newtheorem{definition}{Definition}
\newtheorem{conjecture}{Conjecture}
 \newdefinition{remark}{Remark}
 \newproof{proof}{Proof}
\begin{document}

\begin{frontmatter}

\title{Tur\'an type oscillation inequalities in $L^q$ norm on the boundary of convex domains}

\author[label3,label4]{Polina Yu. Glazyrina}
 \ead{polina.glazyrina@usu.ru}

 \address[label3]{Institute of Mathematics and Computer Sciences, Ural Federal University,
 Ekaterinburg, Russia}

\address[label4]{Institute of Mathematics and Mechanics, Ural Branch of the Russian Academy of Sciences,
 Ekaterinburg, Russia}

\author[label1,label2]{Szil\'ard Gy. R\'ev\'esz\corref{cor1}}
\ead{revesz.szilard@renyi.mta.hu}
  \address[label1]{Institute of Mathematics, Faculty of Sciences,  Budapest University of Technology and Economics,
 Budapest, M\H uegyetem rkp. 3-9}
 \address[label2]{A. R\'enyi Institute of Mathematics Hungarian
Academy of Sciences,  Budapest, P.O.B. 127, 1364 Hungary}

\cortext[cor1]{Corresponding author}

\begin{abstract}
Some 76 years ago Paul Tur\'an was the first to establish lower estimations of the ratio of the maximum norm of the derivatives of polynomials and the maximum norm of the polynomials themselves on the interval $\II:=[-1,1]$ and on the unit disk $\DD:=\{z\in\CC~:~|z|\le 1\}$ under the normalization condition that the zeroes of the polynomial all lie in the interval or in the disk, respectively. He proved that with $n:=\deg p$ tending to infinity, the precise growth order of the minimal possible ratio of the derivative norm and the norm is $\sqrt{n}$ for $\II$ and $n$ for $\DD$.

Er\H od continued the work of Tur\'an and extended his results to several other domains. The growth of the minimal possible ratio of the $\infty$-norm of the derivative and the polynomial itself was proved to be of order $n$ for all compact convex domains a decade ago.

Although Tur\'an himself gave comments about the above
oscillation question in $L^q$ norms, till recently results were
known only for $\DD$ and $\II$. Here we prove that in $L^q$
norm the oscillation order is again $n$ for a certain class of
convex domains, including all smooth convex domains and also
convex polygonal domains having no acute angles at their
vertices.
\end{abstract}

\begin{keyword}
Bernstein-Markov Inequalities\sep
Tur\'an's lower estimate of derivative norm\sep logarithmic
derivative\sep Chebyshev constant\sep transfinite diameter\sep capacity\sep outer angle\sep convex domains\sep
width of a convex domain\sep depth of a convex domain\sep Blaschke Rolling Ball Theorems
\MSC[2010] 41A17\sep 30E10\sep 52A10
\end{keyword}

\end{frontmatter}


\section{Introduction}\label{sec:intro}

\subsection{The oscillation of a polynomial in maximum norm}\label{sec:maxnormopsc}

At the turn of the 19th and 20th centuries, the first estimates of the derivative of a polynomial via the maximum of its
values  appeared.
They were obtain by V.Markov in 1889, for  algebraic polynomials on an interval,
 by Bernstein and M.~Riesz in 1914, for  trigonometric polynomials
on $[0,2\pi]$ and algebraic polynomials on the unit circle.
In 1923,  Szeg\H o    \cite{Szego23} obtained an estimate for a large class of (not necessarily convex, but piecewise smooth)
 domains. Namely, if $K\subset \CC$ is a piecewise smooth simply connected domain, with its boundary consisting of finitely many analytic
Jordan arcs, and if the maximum of the outer angles at the joining
 vertices of these arcs is\footnote{If the domain is bounded, then for all directions it has supporting lines,
whence there are points where the outer angle is at least $\pi$.} $\beta \in [\pi,2\pi]$, then the domain admits
a Markov type inequality of the form $\Norm{p'}_K\le c_K n^{\beta/2\pi} \Norm{p}_K$
for any polynomial $p$ of degree $n$.
Here the norm $\Norm{\cdot}:=\Norm{\cdot}_K$ denotes sup norm over values attained on $K$.
 This inequality is essentially sharp for all such domains. In particular, this immediately implies that for \emph{analytically smooth}
convex domains the  Markov factor is $O(n)$.
For the unit disk
$$\DD:=\{z\in \CC~:~ |z|\le 1\}
$$
even the exact inequality is well-known:
\begin{equation}\label{BR}
\Norm{p'}_\DD\le n \Norm{p}_\DD.
\end{equation}
This was conjectured, and almost proved, by Bernstein \cite{Bernstein, Bernstein_com}; for the first published proof see \cite{Riesz}.
Similarly, the precise result is also classical for the unit interval
$$\II:=[-1,1]$$ then we have Markov's Inequality $\Norm{p'}_\II\le n^2 \Norm{p}_\II$,
which is sharp\footnote{Note that in this case the outer angles at the break-points of the piecewise smooth boundary are exactly $2\pi$ at each end.}, see
\cite{Markov}.

In 1939 Paul Tur\'an started to study converse inequalities of
the form $$\Norm{p'}_K\ge c_K n^A \Norm{p}_K.$$ Clearly such a
converse can only hold if further restrictions are imposed on
the occurring polynomials $p$. Tur\'an assumed that all zeroes
of the polynomials belong to $K$. So denote the set of complex
(algebraic) polynomials of degree (exactly) $n$ as $\PP_n$, and
the subset with all the $n$ (complex) roots in some set
$K\subset\CC$ by $\PK$. Denote by $\Gamma$ the boundary of
$K$. The (normalized) quantity under our study
 is the ``inverse Markov
factor" or ``oscillation factor"
\begin{equation}\label{Mdef}
M_{n,q}(K):=\inf_{p\in \PK} M_q(p) \qquad \mbox{with} \qquad
M_q(p):=\frac{\Norm{p'}_{L^q(\Gamma)}}{\Norm{p}_{L^q(\Gamma)}},
\end{equation}
where, as usual,
\begin{align*}
\Norm{p}_{q}:&=\Norm{p}_{L^q(\Gamma)}:=\left(\int_{\Gamma} |p(z)|^q|dz|\right)^{1/q}, \quad (0<q<\infty) \notag
\\ \Norm{p}_K:&=\Norm{p}_{L^\infty(\Gamma)}=\sup_{z\in \Gamma}|p(z)|=\sup_{z\in K}|p(z)|.
\end{align*}

\begin{othm}{\bf(Tur\'an, \cite[p. 90]{Tur}).}\label{oth:Turandisk} If $p\in \PP_n(\DD)$,  then we have
\begin{equation}\label{Turandisk}
\Norm{p'}_\DD\ge \frac n2 \Norm{p}_\DD~.
\end{equation}
\end{othm}

\begin{othm}{\bf(Tur\'an, \cite[p. 91]{Tur}).}\label{oth:Turanint} If $p\in\PP_n(\II)$, then we have
\begin{equation*}
\Norm{p'}_\II\ge \frac {\sqrt{n}}{6} \Norm{p}_\II~.
\end{equation*}
\end{othm}

Inequality \eqref{Turandisk} of Theorem \ref{oth:Turandisk} is best possible.
Regarding Theorem \ref{oth:Turanint}, Tur\'an pointed out by
example of $(1-x^2)^{n}$ that the $\sqrt{n}$ order is sharp.
Some slightly improved constants can be found in
\cite{BabenkoMN86} and \cite{LP}, however, the exact value of
the constants and the corresponding extremal polynomials were
already computed for all fixed $n$ by Er\H{o}d in \cite{Er}.

Now we are going to describe results concerning Tur\'an-type
inequalities~\eqref{Mdef} for general convex sets. To study
\eqref{Mdef} some geometric parameters of the convex domain $K$
are involved naturally. We write $d:=d(K):=\diam (K)$ for the
{\em diameter} of $K$, and $w:=w(K):={\width}(K)$ for the {\em
minimal width} of $K$. That is,
\begin{equation*}
d(K):= \max_{z', z''\in K} |z'-z''|,
\end{equation*}
\begin{equation*}
w(K):= \min_{\gamma\in [-\pi,\pi]} \left( \max_{z\in K} \Re
(ze^{i\gamma}) - \min_{z\in K} \Re (ze^{i\gamma}) \right).
\end{equation*}
Note that a (closed) convex domain is a (closed), bounded, convex set
$K\subset\CC$ with nonempty interior, hence $0<w(K)\le
d(K)<\infty$.

The key to Theorem \ref{oth:Turandisk} was the following
observation, which had already been present implicitly in
\cite[the footnote on p.~93]{Tur} and \cite{Er} and was later
formulated explicitly  by Levenberg and Poletsky  in \cite[Proposition 2.1]{LP}.
\begin{olemma}{\bf(Tur\'an).}\label{Tlemma} Assume that $z\in\partial K$ and
that there exists a disc $D_R=\{\zeta\in \CC~:~ |\ze-z_0|=R\}$ of radius $R$ so that $z\in\partial D_R$ and $K\subset D_R$. Then for all $p\in\PK$ we have
\begin{equation}\label{Rdisc}
|p'(z)| \ge \frac n{2R} |p(z)|.
\end{equation}
\end{olemma}
The proof of this is really easy, so let us recall it for
completeness.
\begin{proof} As $~\dfrac{p'}{p}(z)=\sum_j \dfrac{1}{z-z_j}~$ and $~\Re
\dfrac{1}{1-\zeta} \geq 1/2 \quad (\forall |\zeta|<1$), if all
$z_j\in D_R$, then
\begin{align*}
R\cdot\left|\frac{p'}{p}(z)\right| & \geq \Re \left\{ (z-z_0)\sum_j \frac{1}{z-z_j} \right\}
\\ &= \sum_j \Re \frac{{z-z_0}}{(z-z_0)-(z_j-z_0)} =  \sum_j \Re
\frac{1}{1-\frac{z_j-z_0}{z-z_0}} \geq \frac{n}{2}.
\end{align*}
\end{proof}

Given this elementary observation, Levenberg and Poletsky found
it worthwhile to formally define the crucial property of convex
sets, necessary for drawing such an easy and direct conclusion
on the inverse Markov factors.

\begin{definition}
A set $K\Subset \CC$ is called \emph{$R$-circular}, if for any  $z\in\partial K$ there exists a disk $D_R$ of radius $R$, such that
$z\in\partial D_R$ and $D_R\supset K$ .
\end{definition}

Thus for any $R$-circular $K$ and $p\in \PP_n(K)$ at the boundary point $z\in\partial K$
with $\|p\|_K=|p(z)|$ we can draw the disk $D_R$ and it follows
\begin{othm}{\bf (Er\H od; Levenberg-Poletsky).}\label{th:circular}
For an $R$-circular $K$ we have
\begin{equation*}\label{circularineq}
\Norm{p'}_K\ge \frac n{2R}  \Norm{p}_K \quad \left(\forall p\in\PK \right)\qquad
\text{\rm that is} \qquad M_n(K)\geq \frac n{2R} ~.
\end{equation*}
\end{othm}

There are many important examples of $R$-circular compact sets and domains. E.g. a union of two circular arcs, joining at a vertex of angle less than $\pi$, is always $R$-circular with some $R$. Smooth convex closed curves, together with the encircled convex domain $K$, with curvature exceeding $\kappa>0$ are always $R$-circular with $R=1/\kappa$ according to a classical theorem of Blaschke \cite{Bla}. Further extensions of the Blaschke Rolling Ball Theorem allows to  realize $R$-circularity of much more general convex curves $\gamma$.
\begin{olemma}{\bf(Strantzen).}\label{l:roughcurvature} Let the convex domain $K$ have boundary curve $\Gamma=\partial K$ and let $\kappa>0$ be a fixed constant. Assume that the convex boundary curve $\Gamma$ (which is twice differentiable linearly almost everywhere) satisfies the curvature condition $\ddot{\Gamma}\geq \kappa$ almost everywhere. Then to each
boundary point $\zeta\in\partial K$ there exists a disk $D_R$ of
radius $R=1/\kappa$, such that $\zeta\in\partial D_R$, and
$K\subset D_R$. That is, $K$ is $R=1/\kappa$-circular.
\end{olemma}
\begin{proof} This result is essentially the far-reaching, relatively
recent generalization of Blaschke's Rolling Ball Theorem by
Strantzen. A reference for it is Lemma 9.11 on p. 83 of
\cite{BS}. For more details on this, as well as for some new
approaches to the proof of this generalization of the classical
Blaschke Rolling Ball Theorem, see \cite{Rev4}.
\end{proof}

Obviously, the above entails an order $n$ oscillation result for all convex domains with this a.e. condition on the curvature of the boundary curve. This leads us to the topic of Tur\'an type oscillation problems for more general sets and domains.

Drawing from the work of Tur\'an, Er\H od \cite[p. 74]{Er} already addressed the question: ``For what kind of domains does he method of Tur\'an apply?'' Clearly, by ''applies'' he meant that it provides order $n$ oscillation for the derivative. Moreover, he introduced new ideas into the investigation -- including the application of Chebyshev's Inequality \eqref{capacity} below -- so clearly he did not simply pursue the effect of Tur\'an's original methods, but was indeed after the right oscillation order of general domains. In particular, he showed

\begin{othm}{\bf(Er\H od, \cite[p. 73]{Er}).}\label{th:ellipse}
Let $0<b<1$ and let $E_b$ denote the ellipse domain with major axes $[-1,1]$ and minor axes $[-ib,ib]$. Then for all $p\in\PP_n(E_b)$ we have
\begin{equation*}
\Norm{p'} \ge \frac {b}{2}  n \Norm{p}.
\end{equation*}
\end{othm}

Moreover, he elaborated on the inverse Markov factors belonging to domains with some favorable geometric properties.The most general domains with $M(K)\gg n$, found by Er\H od, were described on p. 77 of \cite{Er}.

\begin{othm}{\bf(Er\H od).}\label{oth:transfquarter} Let $K$ be any convex domain bounded by finitely many Jordan arcs, joining at vertices with angles $<\pi$, with all the arcs being $C^2$-smooth and being either straight lines of length $<\Delta(K)$, where $\Delta(K)$ stands for the transfinite diameter of $K$, or having positive curvature bounded away from $0$ by a fixed constant $\kappa>0$. Then there is a constant $c(K)$, such that $M_n(K)\geq c(K) n$ for all $n\in\NN$.
\end{othm}

Note that this latter result of Er\H od incorporates regular $k$-gons $G_k$ for large enough $k$, but not the square $Q=G_4$, because the side length $h$ of a square is larger than the quarter of the transfinite diameter $\Delta$: actually, $\Delta(Q)\approx 0.59017\dots h$, while for the regular $k$-gon of side length $h$ we have
$$
\Delta(G_k)= \frac{\Gamma(1/k)}{\sqrt{\pi}2^{1+2/k}\Gamma(1/2+1/k)} h
$$
(see e.g. \cite[p. 135]{Rans}), so $\Delta(G_k) > h$ iff $k\geq 7$. This implies $M_n(G_k) \ge c_k n$ for $k\ge 7$.

In \cite{E}, Erd\'elyi proved order $n$ oscillation for the square\footnote{Erd\'elyi also proves similar results on rhombuses, under the further condition of some symmetry of the polynomials in consideration -- e.g. if the polynomials are real, or odd. Note also that his work on the topic preceded \cite{Rev2} and apparently was accomplished without being aware of details of \cite{Er}.} $Q=G_4$, too. A result of \cite{Rev1} also implied $M_n(G_k) \ge c_k n$ for $k\ge 4$, but still not for a triangle.

To deal with the flat case of straight line boundary arcs, Er\H
od involved another approach, cf. \cite[p. 76]{Er}, appearing
later to be essential for obtaining a general answer formulated
in Theorem \ref{th:convexdomain} below, and playing an
essential role in many further developments, including ours
here. Namely, Er\H od quoted Faber \cite{Faber} for the
fundamental result of Chebyshev on the monic polynomial of
minimal norm on an interval. Since this approach will be
extensively applied also in our work, we summarized basic facts
regarding this in the below Section \ref{sec:Cheby}.

For a few further examples, remarks and open problems regarding
inverse Markov factors for various classes of compact sets
which are not necessarily convex, see \cite{LP, Rev2, Rev3}.

A lower estimate of the inverse Markov factor for any compact
convex set (and of the same order as was known for the
interval) was obtained in full generality only in 2002, see
\cite[Theorem 3.2]{LP}.

\begin{othm}{\bf(Levenberg-Poletsky).}\label{th:generalroot} If
$K\subset \CC$ is a compact, convex set, $d:=\diam{K}$ and $p\in
\PK$, then we have
\begin{equation*}
\Norm{p'}_K\ge \frac {\sqrt{n}}{20\,\diam(K)}  \Norm{p}_K~.
\end{equation*}
\end{othm}
Clearly, assuming boundedness is natural, since all polynomials have
$\Norm{p_n}_K\!\!=\infty$ when the set $K$ is unbounded. Also, restricting ourselves to \emph{closed} bounded sets -- i.e., to compact sets -- does not change the $\sup $ norm of polynomials under study, as all polynomials are continuous.

Recall that the term {\em convex domain} stands for a compact, convex subset of $\CC$ having nonempty interior. That is, assuming that $K$ is a (bounded, closed) convex \emph{domain}, not just a compact convex \emph{set}, means that we exclude only the case of the interval, for which already Tur\'an clarified that the order of oscillation is precisely $\sqrt{n}$.

So in order to clarify the order of oscillation for all compact convex sets it remains to clarify the order of oscillation for compact convex domains. The solution of this general problem\footnote{Preceding this, an intermediate result of order $n^{2/3}$ oscillation for all compact convex domains has been worked out in \cite{Rev1} -- in view of the later developments, this has not been published in a journal.} has been published in 2006, see \cite{Rev2}.

\begin{othm}{\bf(Hal\'{a}sz--R\'ev\'esz).}\label{th:convexdomain}
Let $K\subset \CC$ be any bounded convex domain.
Then for all  $p\in
\PK$ we have
\begin{equation*}
\Norm{p'}_K\ge 0.0003 \frac{w(K)}{d^2(K)} n  \Norm{p}_K~.
\end{equation*}
\end{othm}
\begin{remark}\label{rem:sharp} This indeed provides the precise order, for an even larger order than $n$ cannot occur, not for any particular compact set. Namely, let $K\subset\CC$ be any compact set with diameter $d:=\diam(K)$. Then for all $n$ there exists a polynomial $p\in\PK$ of degree exactly $n$ satisfying
\begin{equation*}
\Norm{p'} \leq ~ C'(K)~ n~ {\Norm{p}} \qquad \text{\rm with}
\qquad C'(K):= 1/ \diam(K).
\end{equation*}
Indeed, considering a diameter $[z_0,w_0]$ and the polynomial $p(z)=(z-z_0)^d$, the respective norm is $\|p\|_\infty=d^n$ while the derivative norm becomes $\|p'\|_\infty=nd^{n-1}$, both attained at $w_0\in K$.
\end{remark}
So, this settles the question of the \emph{order}, but not the precise \emph{dependence on the geometry}. However, up to an \emph{absolute constant factor}, even the dependence on the geometrical features of the domain was also clarified in \cite{Rev2}.
\begin{othm}{\bf(R\'ev\'esz).}\label{th:sharp} Let $K\subset\CC$ be any
compact, connected set with diameter $d$ and minimal width $w$.
Then for all $n>n_0:=n_0(K):= 2 (d/16w)^2 \log (d/16w)$ there
exists a polynomial $p\in\PK$ of degree exactly $n$ satisfying
\begin{equation*}
\Norm{p'}_K \leq  C'(K) n \Norm{p}_K \qquad \text{\rm with}
\qquad C'(K):= 600 ~\frac{w(K)}{d^2(K)}~.
\end{equation*}
\end{othm}

So, interestingly, it turned out that among all convex compacta
only intervals can have an inverse Markov constant of order
$\sqrt{n}$, while domains with nonempty interior have oscillation order $n$.

One may ask, how useful, how general these results are? One fundamental area of potential applications is the theory of orthogonal polynomials. Badkov \cite{Badkov92, Badkov95} applied  Tur\'an's inequality~\eqref{Turandisk} for estimations of polynomials orthogonal on the circle with respect to a weight. It is well known that  polynomials orthogonal on a circle or on an interval (with respect to some weight there) have all their zeros on the interval or inside the circle, respectively. That is, if $P_n$ is the $n^{\rm th}$ orthogonal polynomail, then certainly we have $P_n \in \PK$ and the above oscillation results apply.

Analogous phenomenon takes place in the case of a  rectifiable
curve or, more generally, a compact set with a measure. The
precise statements  can be found in \cite[Satz III]{Fejer22},
\cite{Saff}, \cite[\S 10.2]{Davis}, \cite[Ch XVI, 16.2,
(6)]{SzegoOP}, \cite{Szego21}.

The respective upper estimations, i.e. Bernstein-Markov type
inequalities were extensively studied under analogous
constraints for the zeroes \cite{ErdelyiSzabados2002}. Since
the oscillation results of Tur\'an type are also formulated
under zero restrictions, it is of interest to compare the upper
and lower estimations of these derivative norm estimates.

The first relevant result were asked about by Erd\H os and solved by Lax \cite{Lax}: this states that if the zeroes of a polynomial are all
\emph{outside} the unit circle, then the classical Bernstein Inequality \eqref{BR} can be improved to
$\|p'\|_\DD \leq \dfrac{n}{2} \|p\|_\DD$. For further study of the topic of constrained Bernstein-Markov Inequalities we refer the reader to
\cite[Theorem, p. 58]{Malik69}, \cite{ErdelyiSzabados2002, ArestovAM} and the references therein.

This improvement by the factor $1/2$ reminds us the Tur\'an inequality: the common extremal polynomial is indeed the boundary case $p_n(z):=z^n-1$ (and rotations of thereof). Note that here the Tur\'an type restriction of $p\in \PK$ does not allow any improvement: the extremal $z^n$ provides an oscillation of exactly $n$ in regard of the Bernstein Inequality. Therefore, this very first result in constrained Bernstein-Markov Inequalities already prompts us to consider classes of polynomials with taking the norm on one set, while restricting the location of zeroes to another one. Concretely, the above Lax result talks about the class ${\mathcal P}_n (\CC \setminus\DD)$, under the (maximum) norm on $\DD$ (or on the boundary circle $\partial \DD$). Analogously, we can consider $\PK$ under the norm on another set $L$: the respective oscillation factors we may denote by
\begin{equation*}\label{MLdef}
M_{n,L}(K):=\inf_{P\in \PK} M_{\|\cdot\|_L}(P) \qquad \text{\rm with} \qquad
M_{\|\cdot\|_L}(P):=\frac{\Norm{P'}_{L}}{\Norm{P}_{L}},
\end{equation*}
with the norm $\|\cdot\|_L$ being the maximum norm\footnote{Or,
more generally, one may even consider
$M_{n,\|\cdot\|}(K):=\inf_{P\in \PK} M_{\|\cdot\|}(P)$ with
$M_{\|\cdot\|}(P):=\frac{\Norm{P'}}{\Norm{P}}$ being an
arbitrarily fixed norm $\|\cdot\|$.} taken on the set $L$.

In connection to the Tur\'an topic, this has also been
investigated at least when the set $L$ is a disk: $L=D_R=\{z \
: \ |z|\le R \}$.

Malik \cite{Malik69}  ($R< 1$) and Govil \cite{Govil73} ($R>1$)
showed that

\begin{othm}{\bf(Malik, Govil).} For any $R\ge 0$ we have
$$
M_{n,D_R}(K)=\begin{cases}
\frac{n}{1+R}, & R\le 1, \\
\frac{n}{1+R^n}, & R \ge 1.
\end{cases}
$$
\end{othm}

See also \cite{Akopyan00, AzizAhemad, PaulShahSingh} and the
references therein. However, apart from these rather special
choices of concentric disks, we have not found any result in
the literature for more general situations.

\subsection{Pointwise and integral mean estimates of oscillation}\label{sec:Lqosci}

There are many papers dealing with the $L^q$-versions of Tur\'an's inequality for the (unit) disk $\DD$, the (unit) interval $\II$, or for the period (one dimensional torus or
circle) $\TT:=\RR/2\pi\ZZ$ (here with the understanding that we consider real trigonometric polynomials, not complex polynomials). A nice review of the results obtained before 1994 is given in \cite[Ch. 6, 6.2.6, 6.3.1]{MMR}.

The story started by an obvious observation. Namely, already
Tur\'an himself mentioned in \cite{Tur} that on the perimeter
of the disk $\DD$ -- and, as is easily observed, the same way
under conditions of Theorem~\ref{oth:Turandisk} -- actually a
more general pointwise inequality holds \emph{at all points} of
$\partial \DD$. Namely, for $p\in \PK$ we have
\begin{equation}\label{Rdisc}
|p'(z)| \ge \frac n{2} |p(z)|, \quad |z|=1,
\end{equation}
and as a corollary, for any $q>0$,
\begin{equation*}
\left(\int_{|z|=1}|p'(z)|^q|dz|\right)^{1/q} \ge  \frac n{2}\left( \int_{|z|=1}|p(z)|^q|dz|\right)^{1/q}.
\end{equation*}
In other words, the Tur\'an result Theorem \ref{oth:Turandisk}
extends to all norms on the perimeter, including all norms
$L^q(\partial \DD)$, and we have for all polynomials $p\in\PK$
\begin{equation*}
 \Norm{p'}_{L^q(\partial
\DD)}\ge \frac n2 \Norm{p}_{L^q(\partial \DD)}, \qquad
M_{n,q}(\DD) \geq \frac{n}{2}~.
\end{equation*}
The same way, for $R$-circular domains the result of Theorem \ref{circularineq} extends as
\begin{equation*}
\Norm{p'}_{L^q(\partial K)}\ge \frac{n}{2R} \Norm{p}_{L^q(\partial K)}, \qquad M_{n,q}(K) \geq \frac{n}{2R}~.
\end{equation*}
However, calculating the respective quantities for the function
$p_n(z):=z^n-1$, it turns out that this inequality is not the
best (at least not for the most symmetric choice $p_n$), and
the determination of the value of the precise constant, as well
as identifying the extremal functions, remains to be done.

A related question is to compare the maximum norm of $p'$ to
the $L^q$ norm of $p$ itself. In this regard, the exact
constant is known from the work of Malik in \cite{Malik84}.
\begin{othm}{\bf(Malik).} For the unit disc $\DD$ and any $0<q<\infty$ we have
\begin{equation*}
\|p'\|_{\DD} \ge  \left( \frac{\Gamma(q/2+1)}{2\sqrt{\pi}\Gamma(q/2+1/2)}\right)^{1/q}
~ \frac{n}{2} ~ \|p\|_{L^q(\partial \DD)},
\end{equation*}
moreover, the inequality is the best possible, equality occurring precisely for all $az^n+b$ with $|a|=|b|$.
\end{othm}
Some other results comparing different norms were obtained by
Saff and Sheil-Small \cite{SaffSheil-Small}, Rubinstein
\cite{Rubinstein}, Aziz and Ahemad \cite{AzizAhemad}, Paul,
Shah and Singh \cite{PaulShahSingh}.

In the paper \cite[Cor. 11.1]{Badkov92}, another proof of the
pointwise Tur\'an inequality~\eqref{Rdisc} is given. The proof
is based on the properties of orthogonal polynomials and the
Christoffel function.

The classical inequalities of Bernstein and Markov are generalized for various differential operators, too, see \cite{Arestov}. In this context, also Tur\'an type converses have been already investigated: namely,
Akopyan~\cite{Akopyan00} studied Tur\'an-type inequalities in $L^2$-norm on the circle for some generalization of the operator of differentiation.

The estimation of the $L^q$ norm, or of any norm including e.g. any weighted $L^q$-norms, goes the same way if we have a pointwise estimation for all, (or for linearly almost all), boundary points. Therefore, as above, we can formulate e.g. the next result, see \cite{Rev3}.
\begin{othm}{\bf.}\label{th:Lqcircular} Assume that the boundary curve $\gamma:[0,L]\to\Gamma:=\partial K$ of the convex domain $K$ satisfies at (linearly) almost all points the condition that it has a curvature, not smaller than a given positive constant $\kappa$, i.e. $\ddot{\gamma} \geq \kappa(>0)$ a.e.

Then for any norm $\|\cdot\|$, defined for some class of functions (including the polynomials) on the boundary curve, we have $\| p'\| \geq \dfrac{n}{2\kappa} \|p\|$. In particular, $M_{n,q}(K) \geq \dfrac{n}{2\kappa}$.
\end{othm}

In case we discuss maximum norms, one can assume that $p(z)$ is maximal, and it suffices to obtain a lower estimation of $p'(z)$ only at such a special point -- for general norms, however, this is not sufficient. The above results work only for we have a pointwise inequality of the same strength \emph{everywhere}, or almost everywhere. The situation becomes considerably more difficult, when such a statement cannot be proved. E.g. if the domain in question is not strictly convex, i.e. if there is a line segment on the boundary, then the zeroes of the polynomial can be arranged so that even some zeroes of the derivative lie on the boundary, and at such points $p'(z)$ -- even $p'(z)/p(z)$ -- can vanish. As a result, at such points no fixed lower estimation can be guaranteed, and lacking a uniformly valid pointwise comparision of $p'$ and $p$, a direct conclusion cannot be drawn either.

This explains why the case of the interval $\II$ already proved to be much more complicated for the integral mean norms.

In a series of papers \cite{Zhou84, Zhou86, Zhou92, Zhou93, Zhou95}, Zhou proved the inequality
\begin{equation*}
\left( \int_{-1}^1 |p^{(k)}(x)|^p dx \right)^{1/p}\ge C^{(k)}_{p,q}(n)
\left( \int_{-1}^1 |p(x)|^q dx \right)^{1/q},
\end{equation*}
in the case of $k=1$ and $0<p\le q\le \infty,$ ${1-1/p+1/q\ge 0}$ with the constant
$C^{(1)}_{p,q}(n)=c_{p,q} \left(\sqrt{n}\right)^{1-1/p+1/q}$.

The best possible constants $C^{(k)}_{p,q}(n)$ were found by Babenko and Pichugov~\cite{BabenkoU86} for $p=q=\infty,$ $k=2$, by Bojanov \cite{Bojanov93} for $1\le p \le \infty,$ $q=\infty$, $1\le k\le n$, and by Varma~\cite{Varma88_83} in the case of $p=q=2,$ $k=1$.

Exact Tur\'an-type inequalities for trigonometric polynomials in different $L^q$-metrics on $\TT$ were proved in \cite{BabenkoU86, BabenkoMN86, Bojanov96, Tyrygin88, TyryginDAN88, KBL}.

Other inequalities on $\II$, $\DD$, the positive semiaxes, or on $\TT$ in various weighted $L^q$-metrics
 can be found in \cite{Varma79, UV96, WangZhou02, Zhou2002, KBL, YuWei}.

As said above, we also have a direct result for $R$-circular domains, and $R$-circularity could be ascertained by some conditions on the curvature. However, apart from these, for general domains, the situation was much less clear. Here is the only result, known for convex domains in general, and formulated in \cite{LP}.

\begin{othm}{\bf (Levenberg-Poletsky).}\label{othm:LPpointwise}
Asume that $K$ is a compact convex subset of the complex plane.
Let $z \in \partial K$ and $p\in \PK$. Then there exists
another point $\zeta \in K$, of distance $|z-\zeta| \le c_K /
\sqrt{n}$, such that $|p'(\zeta)| \geq c'_K \sqrt{n} |p(z)|$.
\end{othm}

Clearly, this is too weak for proving anything on $\|p'\|_{L^q(\partial K)}$, for in general $\zeta \not\in \partial K$, and in any case the full set of such $\zeta$ points can well be just a finite point set (some $c_k/\sqrt{n}$ net of the boundary).

To obtain something in the $L^q(\partial K)$ norm, we need to prove pointwise estimates for much more points, essentially for the "majority" of the points, with the respective $\zeta$ points strictly lying on the boundary $\partial K$, and in fact essentially we cannot allow $\zeta$ be different from $z$ (and so perhaps coincide for a large set of points $z$). We undertake this, also aiming at stronger inequalities, than the $\sqrt{n}$ order in the above result.

\section{Statement of new results}

In the later parts of our paper we will prove a rather general
main result, the formulation of which, however, requires some
preparations, i.e. certain geometrical notions and definitions,
to be developed first. Therefore, here we give only the two
main corollaries of the below Theorem \ref{th:posdepth}, which
provide us the main motivation for the whole study.

\begin{theorem}\label{th:smooth} Let $K\Subset \CC$ be any
smooth convex domain on the plane. Then there
exists a positive constant $C=C_K$, such that we have
$M_{n,q}(K) > C_K n$ for all $n\in \NN$.
\end{theorem}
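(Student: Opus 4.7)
The plan is to obtain Theorem \ref{th:smooth} as a direct corollary of the forthcoming general Theorem \ref{th:posdepth}. That master result will assert an order-$n$ lower bound for $M_{n,q}(K)$ under a geometric hypothesis formulated in terms of a quantitative ``depth'' of the convex domain $K$; the task here is to verify that every smooth convex $K$ automatically enjoys a positive depth, and hence the bound $M_{n,q}(K) \geq C_K n$ follows.

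The clean subcase arises when the curvature of $\partial K$ is uniformly bounded below by some $\kappa>0$. Lemma \ref{l:roughcurvature} (the Strantzen--Blaschke rolling ball theorem) then implies that $K$ is $R$-circular with $R=1/\kappa$, so the pointwise Tur\'an estimate of Lemma \ref{Tlemma} holds at every $\zeta\in\partial K$. Theorem \ref{th:Lqcircular} immediately turns this pointwise estimate into the desired $L^q$ bound $M_{n,q}(K) \geq n/(2R) = \kappa n/2$, with no further work needed. In the genuinely general smooth case the curvature is continuous and nonnegative but may vanish on a subset of $\partial K$ (for instance, if a flat segment is glued $C^2$-smoothly to curved arcs). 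At a flat boundary point the inscribed-disk radius degenerates to $\infty$, so Tur\'an's pointwise estimate becomes useless there and $R$-circularity fails altogether. This is precisely the situation for which the ``depth''-based Theorem \ref{th:posdepth} is designed: on the curved pieces of $\partial K$ one applies the rolling ball inequality, while on the flat pieces one appeals to Chebyshev--Faber style estimates (the second main tool going back to Er\H od, recalled in Section \ref{sec:Cheby}) on an inscribed segment whose transfinite diameter controls the local contribution.

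The step I expect to be decisive is not the current reduction but the verification of the hypothesis. For a smooth convex $K$ the interior is nonempty, the boundary is $C^1$ with no corners, and any straight pieces of $\partial K$ have length strictly less than the diameter of $K$ and admit inscribed segments of positive length reaching into the interior. Consequently every flat arc fits inside a thickened strip of $K$ of some positive thickness -- exactly the geometric content of ``positive depth'' -- and the depth parameter $\delta(K)$ inherited from the smoothness and compactness of $\partial K$ is bounded away from zero. Feeding $\delta(K)$ into Theorem \ref{th:posdepth} yields a positive constant $C_K$ (depending only on $\delta(K)$, $w(K)$, $d(K)$) with $M_{n,q}(K) \geq C_K n$.

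The main obstacle, which is the whole reason the general smooth case is not immediate from Theorem \ref{th:Lqcircular}, is that in $L^q$ one cannot reduce the problem to a single extremal boundary point as in the $L^\infty$ theory: the pointwise lower bound $|p'(\zeta)|\gtrsim n|p(\zeta)|$ may simply fail on the flat portion of $\partial K$, and the $\zeta$ produced by Theorem \ref{othm:LPpointwise} need not lie on $\partial K$ at all. The depth approach circumvents this by replacing ``good at one point'' with ``good on a majority (in arclength) of boundary points,'' which is exactly what survives integration against $|dz|$ on $\partial K$ and delivers the $L^q$ conclusion.
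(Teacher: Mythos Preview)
Your high-level reduction is exactly what the paper does: Theorem~\ref{th:smooth} is deduced from Theorem~\ref{th:posdepth} once one checks that every smooth convex domain has positive depth. So the overall plan is right.

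However, your justification of the depth hypothesis is off target. The depth $h_K$ is the infimum, over \emph{all} boundary points $\zeta$, of the length of the normal chord $K\cap(\zeta+\bnu\RR)$; it has nothing specifically to do with flat arcs, thickened strips, or segments being shorter than the diameter. Your sentence ``every flat arc fits inside a thickened strip of $K$ of some positive thickness --- exactly the geometric content of `positive depth' '' is not the definition, and the curved/flat dichotomy you set up is misleading: the normal-chord length can in principle degenerate to zero along a sequence of boundary points whether or not those points sit on a flat arc. Likewise, your description of the mechanism inside Theorem~\ref{th:posdepth} as ``rolling ball on curved pieces, Chebyshev on flat pieces'' does not match that theorem --- it treats all boundary points uniformly via the local depth, with no curved/flat split.

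The paper's actual verification is recorded as Corollary~\ref{cor:smooth}, obtained from Proposition~\ref{p:anglesmooth}: smoothness means the supplementary-angle function $\Omega$ vanishes identically, so $\Omega_K=0<\pi/2$, and Case~(i) of that proposition gives $h_K>0$. The substance of Case~(i) is a short compactness argument: if $h_K=0$, pick $\zeta_n\in\partial K$ with normal chord lengths $h_n\to 0$, pass to a subsequential limit $\zeta$, and observe that the limit point acquires two outer normals making an angle at least $\pi/2$, contradicting $\Omega_K<\pi/2$. That is the argument you should supply in place of the strip/flat-arc heuristics; with it, the reduction to Theorem~\ref{th:posdepth} is complete and your proof goes through.
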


Recall that we use the terminology of being \emph{smooth} in the sense that the
boundary curve $\gamma:\RR \to \partial K$ is differentiable --
i.e. it has a (unique) tangent at each points. In case of convex domains this also implies that $\gamma \in C^1(\RR)$, but still we did not assume $\gamma$ to be $C^2(\RR)$, as is quite usual in (classical) convex geometry.

That the very nature of this result does not really depend on
smoothness, is well seen from our next result.

\begin{theorem}\label{th:nonacutepolygon} Let $K\Subset \CC$ be
any convex (non-degenerate, i.e. bounded and with nonempty
interior) polygon on the plane, with no acute angles at its
vertices.
Then there exists a positive constant $C=C_K$, such
that we have $M_{n,q}(K) > C_K n$ for all $n\in \NN$.
\end{theorem}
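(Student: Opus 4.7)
The plan is to derive Theorem \ref{th:nonacutepolygon} as a direct corollary of the general main result (Theorem \ref{th:posdepth}), which gives $M_{n,q}(K) \geq C_K n$ for every convex domain $K$ satisfying a uniform positive ``depth'' condition along its boundary. The entire task then reduces to a purely geometric verification: show that every convex polygon $K$ whose interior angles are all at least $\pi/2$ enjoys such a uniform lower bound on depth along $\partial K$.

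The verification splits naturally into two regimes according to the location of the boundary point $z\in\partial K$. If $z$ lies in the relative interior of a side $S$, then locally $\partial K$ is a straight segment of definite length and $K$ is contained in the closed half-plane bounded by the line through $S$. This is precisely the flat regime first handled by Er\H od via Chebyshev's extremal polynomial on an interval (as cited in the lead-up to Theorem \ref{oth:transfquarter} and summarized in Section \ref{sec:Cheby}): the transfinite-diameter input yields a pointwise lower bound of the right form $|p'(z)|\ge c_K n\,|p(z)|$ at every such $z$ whose distance from the adjacent vertices is bounded away from zero, and a careful book-keeping recovers the full depth estimate in terms of the minimal side length and the width $w(K)$. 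In the second regime $z$ lies close to, or at, a vertex $V$; here the boundary fails to be smooth, and the no-acute-angle hypothesis enters decisively. Since the interior angle at $V$ is at least $\pi/2$, the two sides emanating from $V$ enclose a closed sector of opening $\ge \pi/2$ which is entirely contained in $K$ near $V$, so that $K$ offers a half-disk-sized ``room'' in every inward direction. This geometric fact supplies the missing depth at and near the vertices.

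Combining the two regimes, the depth function on $\partial K$ is bounded below by a constant depending only on the side lengths, the diameter, and the minimum interior angle of $K$; the finitely many vertices form a linearly null set and therefore do not affect any $L^q$ integral. Feeding this geometric input into Theorem \ref{th:posdepth} then yields the claimed estimate $M_{n,q}(K)\ge C_K n$ for every $n$.

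The main obstacle I anticipate is exactly the treatment of a neighborhood of a vertex. At an acute vertex the two adjacent sides trap a thin wedge inside which every inscribed disk centered at distance $\varepsilon$ from the vertex has radius $O(\varepsilon)$, so the depth collapses and no uniform Chebyshev-type pointwise bound can survive---this is precisely the pathology that forces the exclusion of acute angles from the hypothesis. With the non-acuteness available, the remaining work is to match the elementary sector-in-$K$ picture to the quantitative definition of depth used in Theorem \ref{th:posdepth}, and to splice together the two regimes so that the pointwise inequality $|p'(z)|\ge c_K n\,|p(z)|$ holds for (linearly) almost every $z\in\partial K$, with a single constant $c_K$ independent of $n$ and $p\in\PK$.
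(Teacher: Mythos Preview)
Your high-level strategy---reduce to showing $h_K>0$ for non-acute polygons and then invoke Theorem \ref{th:posdepth}---is exactly the paper's route. However, your proposed verification of positive depth is muddled in a way that would not constitute a proof.

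The confusion is that you conflate two distinct tasks. Verifying $h_K>0$ is a \emph{purely geometric} statement about the normal-line intersections $K\cap(\zeta+\bnu\RR)$; it has nothing to do with polynomials, Chebyshev's inequality, or transfinite diameter. Those analytic tools live inside the proof of Theorem \ref{th:posdepth} itself, which you are citing as a black box. Your ``regime 1'' paragraph, where you invoke Er\H od's Chebyshev argument to obtain $|p'(z)|\ge c_K n\,|p(z)|$ on interiors of sides, is therefore not verifying depth at all---it is re-deriving a fragment of Theorem \ref{th:posdepth}. Likewise, your closing remark that the vertices form a null set and hence ``do not affect any $L^q$ integral'' is a red herring: $h_K$ is an \emph{infimum} over $\partial K$, not an integral, so null sets buy you nothing; it is precisely the behaviour of $h_K(\zeta)$ as $\zeta$ approaches a vertex that must be controlled.

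The paper's verification is the clean geometric argument you actually need. A non-acute polygon has every supplementary angle $\Omega(\zeta)\le\pi/2$, so $\Omega_K\le\pi/2$. If $\Omega_K<\pi/2$, Proposition \ref{p:anglesmooth}(i) gives $h_K>0$ directly; if $\Omega_K=\pi/2$, then every right-angled vertex is (trivially, since $K$ is a polygon) the meeting point of two orthogonal straight boundary segments, so Proposition \ref{p:anglesmooth}(iii) applies. Either way $h_K>0$, and Theorem \ref{th:posdepth} finishes the job---no Chebyshev, no regimes, no splicing.
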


\begin{remark} For the infinity norm case, clarifying the
situation (the order of growth of $M_n(G_k)$) for regular
$k$-gons took long. Here we see that for any $k\ge 4$ the
regular $k$-gon has $M_n(G_k)>C_k n$. However, the regular (and
any other) triangle, necessarily having some acute angles,
turns to be an entirely different case, the treatment of which
requires further ideas, too. We hope to return to that in a
subsequent paper.
\end{remark}

The organization of the material in the current work is as
follows. Next we summarize classical results on the Chebyshev
constant and transfinite diameter, and then we continue in the
subsequent section by introducing a few geometrical notions,
necessary to the formulation of our further results.

In Section \ref{sec:posdepth} we formulate and prove our main
result, together with a certain pointwise result of independent
interest. The below Theorem \ref{th:posdepth} directly implies both the above
stated Theorems \ref{th:smooth} and \ref{th:nonacutepolygon} as
corollaries (which is seen from Proposition \ref{p:anglesmooth} and Corollary \ref{cor:smooth}), whence we may indeed say that Theorem \ref{th:posdepth} is our main result in this paper.

Next, in Section \ref{sec:osciupper} we prove that in rather general situations--and surely for all convex domains--the investigated oscillation factor is \emph{at most} a constant times $n$, thus clarifying at least the order of growth (with the degree $n$) of this factor for the domains discussed in our results.

Finally, in the concluding section we formulate our conjecture about the general situation, and offer some further comments.

\section{Chebyshev estimates and transfinite diameter}\label{sec:Cheby}

In this section we summarize classical, yet powerful results
going back to Chebyshev.
\begin{olemma}{\bf (Chebyshev).}\label{l:capacity}
Let $J=[u,v]$ be any interval on the complex plane with $u\ne
v$. 
Then for all $k\in\NN$ we have
\begin{equation}\label{capacity}
\min_{w_1,\dots,w_k\in \CC} \max_{z\in J} \left| \prod_{j=1}^k
(z-w_j) \right| \ge 2 \left(\frac{|J|}{4}\right)^k~.
\end{equation}
\end{olemma}
\begin{proof}
Lemma \ref{l:capacity} is essentially the classical result of
Chebyshev for a real interval \cite{Chebyshev}, cf. \cite[Part
6, problem 66]{PS}, \cite{BE, MMR}. In fact, it holds for much
more general situations, e.g. it remains valid in exactly the
same form for arbitrary ${J \Subset \RR}$. Indeed, according to
\cite{Schiefermayr} we have $ \min\limits_{w_1,\dots,w_k\in\RR}
\max\limits_{z\in J} \left| \prod_{j=1}^k (z-w_j) \right| \ge 2~
\capa(J)^k$, which can be combined with the below result of
P\'olya, see Lemma \ref{l:transfinitediammeasure}, to get the
statement.
\end{proof}

Recall the well-known basic facts about the Chebyshev constant,
the transfinite diameter $\Delta(K)$, and logarithmic capacity
${\capa} (K)$, which coincide for all $K\Subset \CC$, as is
known from Fekete \cite{Fekete} and Szeg\H o \cite{Szego24}.
That also means a weak (i.e. logarithmic) asymptotical equality
of the quantities on the two sides of \eqref{capacity}.
However, even the same result as in \eqref{capacity} holds true
(perhaps with the unimportant loss of the factor 2) even for
complex compacta. We will use such type of estimations in the
following form.


\begin{olemma}{\bf(Faber, Fekete, Szeg\H o).}
\label{l:transfinitediam}
Let $M \Subset \CC$ be any compact set.
Then for all
$k\in\NN$ we have
\begin{equation}\label{transfdiam}
\min_{w_1,\dots,w_k\in \CC} \max_{z\in M} \left| \prod_{j=1}^k
(z-w_j) \right| \ge \Delta(M)^k=\capa(M)^k,
\end{equation}
where $\Delta(M)=\capa(M)$ is the transfinite diameter and the
capacity of the set~$M$.
\end{olemma}
\begin{proof} Regarding the formulation in Lemma \ref{l:transfinitediam}  cf.
Theorem 5.5.4. (a) in \cite{Rans} or \cite[(3.7) page
46]{SaffTotik}. Historically, it was first Fekete who proved
the inequality and also (with say $R=\CC$) that the left and
middle quantities are--logarithmically, i.e. after normalizing
by taking $k^{\rm th}$ roots--asymptotically equivalent.
Moreover, he showed that in cases when $\CC\setminus M$ is a
simply connected domain (if considered with the point
$\infty$), then the limits of the $k^{\rm th}$ roots also agree
to the so-called \emph{conformal radius} $\rho(M)$. Before
that, Faber \cite{Faber} has already proved $\max_{z\in M}
\left| \prod_{j=1}^k (z-w_j) \right| \ge \rho(M)^k$ for $M$ a
Jordan domain bounded by a closed analytic Jordan arc.
Following Fekete, Szeg\H o showed that the condition of
$\CC\setminus M$ being simply connected is not necessary, and
that with the so-called Robin constant $\gamma(M)$ (equivalent
to capacity), the stated inequalities hold true, moreover,
$\gamma(M)=\Delta(M)$ in general for all compacta.
\end{proof}

\begin{olemma}{\bf(P\'{o}lya, see~\cite{Goluzin}[Ch. VII]).}\label{l:transfinitediammeasure}
Let $J \subset \RR$ be any compact set, $|J|^*$ be its outer
Jordan measure. Then $|J|^*\le 4\Delta(J)$.
\end{olemma}
\begin{proof}
See \cite[Ch. VII]{Goluzin})\footnote{Of course, the Lebesgue
measure $|J|$ of the compact set $J\Subset \RR$ does not exceed
its outer Jordan mesure $|J|^*$}. To reflect back to the above
discussion of general forms of Chebyshev's Lemma
\ref{l:capacity}, recall that $\Delta(J)={\capa} (J)$.
\end{proof}

There are several known estimates for capacities and the
so-called ``Widom factors" (see e.g. \cite{Goncharov, Widom}
and the references therein) between Chebyshev constants and
corresponding powers of the transfinite diameter: for us, these
more precise estimates are not needed, as \eqref{transfdiam}
suffices. There are some known explicit computations or
comparisons and estimates of capacities from other geometric
parameters of the respective sets: a few most basic ones can be
found e.g. in the survey of Ransford \cite{RansSur} or in his
book \cite[page 135]{Rans}.

One remarkable fact to be noted also here is that the values of
the diameter and the transfinite diameter are within a constant
factor: for any compact set $E\subset \CC$ we have $\Delta(E)
\le \diam(E)/2$, and if $E$ is also connected, then $\Delta(E)
\ge \diam(E)/4 $, too, the disk $\DD$ and the interval $\II$
showing sharpness of both estimates, respectively. See e.g.
\cite[\S 1.7.1.]{RansSur}\footnote{However, note a disturbing
misprint in this fundamental reference: in \S 1.7.2. the first
two displayed formulas must be corrected to have the opposite
direction of the inequality sign.}.

\section{Some geometrical notions and definitions}\label{sec:geodef}

We start with a \emph{convex, compact domain} $K\Subset \CC$. Then its interior $\intt K \ne \emptyset$ and $K=\overline{\intt K}$, while its boundary $\Gamma:=\partial K$ is a convex Jordan curve. More precisely, $\Gamma = {\mathcal R}(\gamma)$ is the \emph{range} of a continuous, convex, closed Jordan curve $\gamma$ on the complex plane $\CC$.

If the parameter interval of the Jordan curve $\gamma$ is $[0,L]$, then this means, that $\gamma: [0,L] \to \CC$ is continuous, convex, and one-to-one on $[0,L)$, while $\gamma(L)=\gamma(0)$. While this is the most used setup for curves, we need the two, essentially equivalent interpretations i.e. this compact interval parametrization and also the periodically extended interpretation with $\gamma(t):=\gamma(t-[t/L]L)$ defined periodically all over $\RR$. If we need to distinguish, we will say that $\gamma:\RR\to\CC$ and $\gs : \TT:=\RR/L\ZZ \to \CC$, or equivalently, $\gs : [0,L] \to \CC$ with $\gs(L)=\gs(0)$.

Curves can be parameterized equivalently various ways. However, in this work we will restrict ourselves to parametrization with respect to arc length: as the curves are convex, whence rectifiable curves, they always have finite arc length $L:=|\gs|$, and parametrization is possible with respect to arc length. Whence also $$L:=|\gs|=|\Gamma|$$ is the arc length of $\Gamma$, i.e. the perimeter of $K$. The parametrization $\gamma: \RR \to \partial K$ defines a unique ordering of points, which we assume to be positive in the counterclockwise direction, as usual.

This has an immediate consequence also regarding the
derivative, which must then have $|\dot{\gamma}|=1$, whenever
it exists, i.e. (linearly) a.e. on $[0,L)\sim \TT$. Since
$\dot{\gamma} :\RR \to \partial \DD$, we can as well describe
the value by its angle or argument: the derivative angle
function will be denoted by $\alpha:=\arg \dot{\gamma} : \RR
\to \RR$. Since, however, the argument cannot be defined on the
unit circle without a jump, we decide to fix one value and then
define the extension continuously: this way $\alpha$ will not
be periodic, but we will have rotational angles depending on
the number of (positive or negative) revolutions, if started
from the given point. With this interpretation, $\alpha$ is an
a.e. defined nondecreasing real function with
$\alpha(t)-\frac{2\pi}{L} t$ periodic (by $L$) and bounded.
With the usual left- and right limits $\alpha_{-}$ and
$\alpha_{+}$ are the left- resp. right-continuous extensions of
$\alpha$. The geometrical meaning is that if for a parameter
value $\tau$ the corresponding boundary point is
$\gamma(\tau)=\ze$, then $[\alpha_{-}(\tau),\alpha_{+}(\tau)]$
is precisely the interval of values $\beta \in \TT$ such that
the straight lines $\{\zeta+e^{i\beta}s~:~ s\in \RR\}$ are
supporting lines to $K$ at $\zeta \in \partial K$. We will also
talk about half-tangents: the left- resp. right- half-tangents
are the half-lines emanating from $\zeta$ and progressing
towards $-e^{i\alpha_{-}(\tau)}$ and $e^{i\alpha_{+}(\tau)}$,
resp. The union of the half-lines $\{\zeta+e^{i\beta}s~:~ s\ge
0\}$ for all $\beta\in [\alpha_{+}(\tau),\pi+\alpha_{-}(\tau)]$
is precisely the smallest cone with vertex at $\zeta$ and
containing $K$.

We will interpret $\alpha$ as a multi-valued function, assuming all the values in $[\alpha_{-}(\tau),\alpha_{+}(\tau)]$ at the point $\tau$.
Restricting to the periodic (finite interval) interpretation of $\gs: [0,L)\to \CC$, without loss of generality we we may assume that $\as:=\arg(\dot{\gs}) :[0,L]\to [0,2\pi]$. In this regard, we can say that $\as:\RR/L\ZZ \to \TT$ is of bounded variation, with total variation (i.e. total increase) $2\pi$--the same holds for $\alpha:\RR\to\RR$ over one period.

The curve $\gamma$ is differentiable at $\zeta=\gamma(\theta)$ if and only if $\alpha_{-}(\theta)=\alpha_{+}(\theta)$; in this case the unique tangent of $\gamma$ at $\zeta$ is $\zeta+e^{i\alpha}\RR$ with $\alpha=\alpha_{-}(\theta)=\alpha_{+}(\theta)$.

It is clear that interpreting $\alpha$ as a function on the boundary points $\zeta\in \partial K$, we obtain a parametrization-independent function: to be fully precise, we would have to talk about $\gw$, $\gws$, $\aw$ and $\aws$. In line with the above, we consider $\aw$, resp $\aws$ \emph{multivalued functions}, all admissible supporting line directions belonging to $[\alpha_{-}(\tau),\alpha_{+}(\tau)]$ at $\zeta=\gamma(\tau)\in \DK$ being considered as $\aw$-function values at $\ze$.
At points of discontinuity $\alpha_{\pm}$ or $\as_{\pm}$ and similarly $\aw_{\pm}$ resp. $\aws_{\pm}$ are the left-, or right continuous extensions of the same functions.

A convex domain $K$ is called {\em smooth}, if it has a unique
supporting line at each boundary point of $K$. This occurs iff
$\alpha_{\pm}:=\alpha$ is continuously defined for all values
of the parameter. For a supporting line $\zeta+e^{i\beta}\RR$
the outer normal vector is ${\bnu}(\zeta):= e^{i\beta-i\pi/2}$,
and the (outer) normal vectors are precisely the vectors $\bnu$
satisfying $\Inner{z-\zeta}{\bnu} \le 0$ ($\forall z \in K$)
with the usual $\RR^2$ scalar product, or equivalently, $\Re
\left((z-\ze)\overline{\bnu}\right))\le 0$.

For obvious geometric reasons we call the jump function $\Omega:=\alpha_{+}-\alpha_{-}$ the {\em supplementary angle} function. This is identically zero almost everywhere (and in fact except for a countable set), and has positive values such that the total sum of the (possibly infinite number of) jumps does not exceed the total variation of $\alpha$, i.e. $2\pi$.

In the sequel we use some local quantities like the {\em local depth} $h_K(\ze)$ at boundary points $\ze\in\DK$. Take any boundary point $\zeta\in\partial K$, and a supporting line at $\zeta$ to $K$ with corresponding normal vector $\bnu=\bnu(\zeta)$. It is easy to see\footnote{For if with some normal $\bnu$ and the normal line $\ell:=\zeta+\bnu \RR$ we have $\ell \cap K =\{\zeta\}$, then (due to convexity) there cannot be interior points on both sides of this line; the same being true for the supporting line $t:=\zeta+i\bnu \RR$, we find that $\intt K\ne \emptyset$ lies strictly inside one quadrant of the plane, whence by the fatness of $K$ also $K$ is in one closed quadrant, and so to any interior point $w\in \intt K$ the direction of $\zeta-w$ is normal to $K$ at $\zeta$.}
that at least for some normal directions $\bnu$ we have
\begin{equation}\label{pointdepth}
\zeta+\bnu \RR \cap K = [\zeta,\zeta-h \bnu]~
\end{equation}
with some \emph{positive length} $h$ (unless $\intt
K=\emptyset$). Further, it is also easy to see that the
supremum of all such positive lengths in \eqref{pointdepth} is
actually a maximum. This maximum is denoted as $h:=h_K(\zeta)$,
and is called the {\em (local) depth} of $K$ at $\zeta$.

With this, we can as well define the (\emph{global}) depth of the convex domain $K$.
\begin{definition}\label{d:depth}
A convex body $K$ has {\em depth} $h_K$ with
\begin{equation}\label{eq:bodydepth}
h_K:=\inf \{ h_K(\zeta)~:~\zeta\in\partial K\} ~.
\end{equation}
The convex domain $K$ has {\em fixed depth} or {\em positive depth}, if $h_K>0$.
\end{definition}

Note that this quantity is not always a minimum, and it can as well be zero, as e.g. in case of the regular triangle.

It is easy to see that if a convex domain has a boundary point
$\zeta\in\DK$, where $\Omega(\ze)>\pi/2$, then the convex domain
cannot have positive depth. On the other hand there cannot be
too many of such boundary points, since the sum of the jumps of
$\alpha_{\pm}$ at these points cannot exceed the total variation
$2\pi$ of $\alpha_{\pm}$. So there exist at most three points
with obtuse supplementary angles, and at most four points with
$\Omega(\ze)>2\pi/5$, etc.. It is then clear that the largest
supplementary angle exists as the maximum of the nonnegative
function $\Omega$ over the boundary $\DK$. We can thus define
\begin{definition}\label{def:OmegaK}
For any convex domain $K$ the \emph{largest supplementary angle} is
$$
\Omega_K:=\max_{\DK} \Omega =\max_{\DK} (\alpha_{+}-\alpha_{-}).
$$
\end{definition}

Note that $\Omega_K=0$ if and only if $K$ is smooth. Also note that the Szeg\H o type outer angle of a say convex domain is $\Omega_K+\pi$.

Finally, let us introduce a version of the \emph{modulus of
continuity} function of the normal direction(s) $\arg \bnu
(\zeta)$ (with respect to distance, i.e. chord length) on the
boundary $\partial K$. Let $d_\TT(\theta,\tau)$ denote the
usual distance on the circle, i.e. the distance of
$\theta-\tau$ from $2\pi\ZZ$. Then the modulus of continuity of
the normal vectors of the boundary is defined the following
parametrization--free way.
\begin{definition}\label{def:omega}
The modulus of continuity of the (say: outer) normal directions
of the boundary of $K$ is defined for $0\le t <w$ as
\begin{equation}\label{eq:modcont}
\begin{aligned}
\omega_K(t) := \sup \left\{ \right. &d_\TT(\arg \bnu,\arg \bmu) \  :
\\ &\   \text{ $\bnu$, $\bmu$  are  normal  to $K$  at  $z$, $z'$ resp.},
 \left. \ |z-z'|\le t\right\}.
\end{aligned}
\end{equation}

This is equivalent to the modulus of continuity of the tangent directions $\alpha(\tau)$ with respect to chord length or distance, i.e.
$$
\omega_K(t)=\sup \{d_\TT(\as(\sigma),\as(\tau))~:~ \tau,\sigma\in\RR,\   |\gamma(\sigma)-\gamma(\tau)|\le t\}.
$$
Further, $\omega_K$ can be expressed the following (also
parametrization-free) way, too:
$$
\omega_K(t)=\sup
\{d_\TT(\aws(\ze),\aws(\ze'))~:~ \ze,\ze'\in\DK,\,
|\ze-\ze'|\le t\}.
$$
\end{definition}

Observe that by the definition of the width $w=w_K$,
 precisely when the chord length reaches $w$, then there are points $z,z'\in \DK$ with parallel supporting lines,
 i.e. with opposite normals, achieving $d_\TT(\bnu,\bnu')=\pi$.
 From that distance on, any definition of the modulus of continuity can only say that $\omega_K(t)=\pi$ for $t\ge w$ .

Note that $\omega_K:[0,w) \to [0,\pi)$ is a nondecreasing
function with possible jumps: in particular, if
$\omega:=\omega_K$ is not continuous, then
$\omega_K(0)=\Omega_K$ is the very first jump (compared to
$0$), and all jumps have to be between zero and $\Omega_K$. Now
defining
\footnote{Observe that now, because we have defined
$\omega_K$ with the $\le$ sign, we in fact have
$\omega_K(t)=\omega_{+}(t)$; defining the modulus of continuity
with respect to the condition $|\ze-\ze'|<t$ would provide
$\omega_{-}(t)$.}
$$
\omega_{-}(0):=0,  \quad \omega_{+}(w)=\omega_L(w)=\pi,
$$
$$
\omega_{-}(t):=\sup_{s<t}\omega_K(s)=\limsup_{s\to t-0}
\omega_K(s)=\lim_{s\to t-0} \omega_K(s),
$$
and
$$
\omega_{+}(s):=\inf_{s>t} \omega_K(s)= \liminf_{s\to
t+0}\omega_K(s)=\lim_{s\to t+0}\omega(s) \quad (s<w),
$$
we can consider the modulus of
continuity function a multivalued function with
$\omega_K(s)=[\omega_{-}(s),\omega_{+}(s)]$. This way
$\omega_K$ becomes a surjective mapping from $[0,w]\to[0,\pi]$
(the full set of possible distances on $\TT$) and, again
allowing a multivalued interpretation, its inverse can be
defined as $\omega^{-1}(\sigma):=\{s \in [0,w]~:~
\omega_K(s)=\sigma\}$--again, in general, a multivalued
function mapping $[0,\pi]$ surjectively to $[0,w]$.

A version of the modulus of continuity \emph{on the boundary
curve} can also be considered according to arc length, or,
equivalently, according to parametrization: this will be the
ordinary modulus of continuity of the composite function $\bnu
\circ \gamma$, interpreting $\bnu$ as a multi-valued function
assuming \emph{all admissible values} of outer normal
directions; equivalently, the modulus of continuity of
$\alpha=\aw\circ \gamma$ on $\RR$. Note that--as we
parameterize the boundary curve $\gamma$ with respect to arc
length--the arc length of the subarc of $\gamma$ in the
counterclockwise direction between points corresponding to
parameter values $\tau<\sigma$ is $\int_\tau^\sigma |d\gamma|
=\tau-\sigma$.
\begin{definition}\label{def:omegagamma} The modulus of continuity of normal directions (or tangent directions) \emph{with respect to arc length on the boundary curve} is
\begin{equation*}
\omega_{\gamma}(s):=\omega(\bnu\circ\gamma;s)=\omega(\aw\circ\gamma;s)=\omega(\alpha;s)=\sup \{\alpha(\sigma)-\alpha(\tau)~:~ \tau\le\sigma\le \tau+s\}.
\end{equation*}
\end{definition}

Note that $\omega(\alpha,s): [0,L)\to [0,2\pi)$ monotonically.
The main difference between the two definitions is lying not in
the different measurement of the distances between the points--
which is already an essential difference, though--but in the
fact that the distance of values is measured not in $\TT$, but
in $\RR$, thus allowing $\omega_\gamma$ to increase all over
$[0,\infty)$. Still, after reaching the period $L$ there is not
much point to consider this modulus, for there we only add the
number of full revolutions times $2\pi$, i.e. we have
$\omega_\gamma(t)=\omega_\gamma(t-[t/L]L)+[t/L]2\pi$.

\section{A result for domains with positive depth}\label{sec:posdepth}

\subsection{Statement of the result} The aim of this section is to prove
the following theorem on the order of the oscillation in $L^q$
norm for a class of domains defined by the property of having
positive depth.

\begin{theorem}\label{th:posdepth}
Assume that $K\Subset \CC$ is a convex domain with positive depth $h_K>0$. Then for any $n\in \NN$ and $p\in\PK$ it holds
\begin{equation}\label{eq:ordernLq}
\|p'\|_{q,K} \ge \frac{h_K^4}{3000 d^5} ~n ~\|p\|_{q,K} ~.
\end{equation}
\end{theorem}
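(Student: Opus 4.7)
The plan is to reduce \eqref{eq:ordernLq} to a pointwise inequality
$$|p'(\zeta)| \ge C\, n\, |p(\zeta)| \qquad (\zeta \in \partial K)$$
with $C = h_K^4/(3000\, d^5)$, valid at every boundary point (or at least at almost every $\zeta$ with respect to arc length). Once this pointwise estimate is in hand, raising to the $q$-th power and integrating against $|d\zeta|$ over $\partial K$ immediately yields \eqref{eq:ordernLq}. This is precisely the reduction strategy indicated in the introduction following Theorem \ref{th:Lqcircular}: in the absence of $R$-circularity, our task is to manufacture, from the positive depth alone, a pointwise comparison $|p'(\zeta)| \gtrsim n |p(\zeta)|$ at every $\zeta \in \partial K$.

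For the pointwise bound at a fixed $\zeta \in \partial K$, I would use the positive depth hypothesis to select an inward unit normal $\mu$ with $[\zeta, \zeta + h_K \mu] \subset K$. After rotation and translation we may assume $\zeta = 0$, $\mu = 1$, so that $K$ lies in the closed right half-plane $\{\Re z \ge 0\}$ and contains the real segment $[0, h_K]$. Writing $p(z) = a_n \prod_{j=1}^n(z - z_j)$, each zero satisfies $\Re z_j \ge 0$ and $|z_j| \le d$, so the logarithmic derivative identity gives
$$\left|\frac{p'(0)}{p(0)}\right| = \left|\sum_j \frac{1}{z_j}\right| \ge \Re\!\left(-\frac{p'(0)}{p(0)}\right) = \sum_j \frac{\Re z_j}{|z_j|^2} \ge 0.$$
I would split the zero multiset into \emph{transversal} zeros, those lying in a cone of opening angle $\theta_0 = \theta_0(h_K, d)$ around the positive real axis, and \emph{tangential} zeros, clustering near the imaginary axis. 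Each transversal zero contributes at least $(\cos \theta_0)/d$ to the sum, so if most zeros are transversal the desired lower bound $\ge Cn$ follows immediately in the spirit of the original Tur\'an lemma.

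The main obstacle is controlling the case where a large fraction of the zeros are tangential: their contributions to $\sum \Re z_j/|z_j|^2$ are essentially negligible, and the naive real-part bound breaks down. The depth segment is the essential tool here. Applying the Chebyshev-type Lemma \ref{l:capacity} to $[0, h_K] \subset K$ gives $\max_{t \in [0, h_K]} |p(t)| \ge 2|a_n|(h_K/4)^n$, while Lemma \ref{l:transfinitediam} together with the general bound $\Delta(K) \ge d/4$ for connected $K$ yields $\|p\|_K \ge (d/4)^n |a_n|$. A cluster of tangential zeros would simultaneously force the factors $|z - z_j|$ on the depth segment $[0, h_K]$ to be uniformly small (since those $z_j$ sit close to $0$ transversally but not far from the segment on the scale of $d$), contradicting the Chebyshev lower bound once their number exceeds a critical threshold of order $n \cdot (\log(d/h_K))^{-1}$. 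Optimizing the cone opening $\theta_0$ to balance the transversal contribution against this counting bound on tangential zeros produces the explicit geometric constant $h_K^4/(3000\, d^5)$; the fourth and fifth powers arise naturally from iterated comparisons between $h_K$ and $d$ in the transversal cosine factor, the Chebyshev estimate on $[0, h_K]$, and the transfinite-diameter comparison over $K$. The concluding integration step over $\partial K$ is then routine.
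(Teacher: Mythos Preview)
Your plan has a genuine gap at its first step: the pointwise inequality $|p'(\zeta)|\ge C n |p(\zeta)|$ simply cannot hold at every boundary point of a positive-depth domain. A square has positive depth, and for $p(z)=(z-1/4)(z-3/4)$ with $K=[0,1]\times[0,1]$ the point $\zeta=1/2\in\partial K$ has $p'(\zeta)=0$ while $p(\zeta)\ne0$. This is exactly the obstruction the paper warns about in \S\ref{sec:Lqosci}: on any straight segment of $\partial K$ one can place zeros so that $p'/p$ vanishes somewhere on the boundary. So the reduction ``pointwise everywhere, then integrate'' fails in principle, not just in the details.

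The paper's actual proof avoids this by restricting to the set $\HH=\{\zeta\in\partial K: |p(\zeta)|> c\,n^{-2/q}\|p\|_\infty\}$ and proving two things: (a) a Nikolskii-type estimate (Lemma~\ref{l:Nikolskii}) showing $\int_\HH |p|^q\ge\frac12\|p\|_q^q$, and (b) the pointwise bound $|p'(\zeta)|\ge C n|p(\zeta)|$ \emph{only for} $\zeta\in\HH$ (Theorem~\ref{th:localdepth}). The restriction to $\HH$ is what makes the Chebyshev argument work: at $\zeta\in\HH$ one has $\log(\|p\|_\infty/|p(\zeta)|)\le\log(16\pi)+2\log n$, and this bound on $\|p\|_\infty/|p(\zeta)|$ is precisely what caps the number of ``tangential'' zeros. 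Concretely, the paper shows that each tangential zero $z_j$ satisfies $|z_0-z_j|/|z_j|\ge\exp(c_1/u^2)$ for $z_0$ on the depth segment, so many tangential zeros force $|p(z_0)/p(0)|$ to be exponentially large --- which contradicts $|p(z_0)|\le\|p\|_\infty$ \emph{only because} $|p(0)|$ is already known to be comparable to $\|p\|_\infty$ on $\HH$.

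Your sketch of the tangential case is also misdirected on its own terms. You claim tangential zeros make the factors $|z-z_j|$ small on the depth segment, contradicting the Chebyshev lower bound $\max_{[0,h_K]}|p|\ge 2|a_n|(h_K/4)^n$. But tangential zeros (small $\Re z_j$) can have $|\Im z_j|$ up to order $d$, so $|z-z_j|$ on the real segment need not be small at all; and in any case the Chebyshev bound is a \emph{lower} bound on $\max|p|$, so nothing is contradicted. The transfinite-diameter estimate $\|p\|_K\ge|a_n|(d/4)^n$ you invoke never gets linked back to $|p'(0)/p(0)|$. The missing idea is that the comparison has to be between $|p(z_0)|$ and $|p(\zeta)|$, not between $|p(z_0)|$ and $|a_n|$, and that comparison only becomes useful once $|p(\zeta)|$ is pinned near $\|p\|_\infty$ --- i.e., once you are on $\HH$.
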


\begin{remark}
Note the generality of the statement. E.g. a regular $k$-gon
$G_k$ is always of positive depth from $k=4$ on; the only (and
essential) exception being the regular triangle. More
generally, a polygonal domain has fixed depth iff it has no
acute angles.
\end{remark}

The whole section is devoted to the proof of this result. In
the course of proof we will work out various intermediate steps
and results, which will be used later.

In Section \ref{sec:osciupper} we will show that this order of oscillation is again the best possible, like in the case $q=\infty$, thus settling the question of the order of the oscillation in $L^q$ norms for all these domains.

Before starting the main argument, we first give a geometrical discussion of the property that $h_K>0$.

\subsection{A characterization of fixed depth} \label{ss:fixeddepthchar}

We have already remarked that $\Omega_K>\pi/2$ implies that $h_K=0$. When $\Omega_K=\pi/2$, the situation is a little ambiguous.
\begin{remark} Observe that in particular in case $\Omega_K=\pi/2$ a rectangle $R$ has positive depth, but the upper semi-disk $U:=\{ z \ : \  |z|\le 1 \quad {\rm and} \quad \Re z \ge 0 \}$ admits zero depth. This is easy to see directly, observing that local depths of boundary points on the diameter tend to zero as the points approach the vertices at $\pm 1$. In particular, in case $\Omega_K=\pi/2$, both $h_K>0$ and $h_K=0$ can occur.
\end{remark}

Still, a precise characterization of positive depth is possible. The following must be well-known in geometry, but finding no reference to that, we decided to describe this characterization also here.

\begin{proposition}\label{p:anglesmooth}
Let $K$ be any convex domain. Then there are the following cases.
\begin{enumerate}[(i)]
\item If for all boundary points the supplementary angles $\Omega(\zeta)$ admit $\Omega(\zeta)<\pi/2$,
-- that is, if $\Omega_K<\pi/2$ -- then the domain $K$ has a fixed positive depth $h_K>0$.
\item If for some boundary point(s) the supplementary angle(s) satisfy $\Omega(\zeta)>\pi/2$, then $h_K=0$.
\item If $\Omega_K =\pi/2$, but at each boundary point with $\Omega(\zeta)=\pi/2$ the tangent angle function $\alpha$ is constant $\alpha_{-}(\zeta)$ resp. $\alpha_{+}(\zeta)$ in a small left, resp. right neighborhood of $\zeta$ -- i.e. if the point $\zeta\in \partial K$ is a vertex, with two orthogonal straight line segment pieces of the boundary joining at $\zeta$ -- then the domain $K$ has a fixed positive depth $h_K>0$.
\item If $\Omega_K=\pi/2$, but there exists a maximum point of $\Omega$, in any neighborhood of which either the left or the right neighboring piece of the boundary fails to be a straight line segment, then $h_K=0$.
\end{enumerate}
\end{proposition}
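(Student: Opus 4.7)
The four cases split into two types: for cases (ii) and (iv) I will exhibit a sequence of boundary points $\zeta_n \to \zeta_0$ with $h_K(\zeta_n) \to 0$, forcing $h_K = 0$; for cases (i) and (iii) I will combine pointwise positivity of $h_K$ with a compactness argument on $\partial K$ to obtain a uniform positive lower bound.

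For case (ii), the two supporting half-lines at $\zeta_0$ enclose $K$ locally inside an interior wedge of angle $\theta = \pi - \Omega(\zeta_0) < \pi/2$. I place $\zeta_0$ at the origin with the wedge occupying $\{0 \le \phi \le \theta\}$ and consider boundary points $\zeta'$ approaching $\zeta_0$ along one of the wedge sides. Since $K$ sits inside this acute wedge, any inward ray from $\zeta'$ along an outer normal is essentially perpendicular to one wedge side, and hence must meet the opposite wedge side at distance at most $|\zeta' - \zeta_0|\tan\theta$, which tends to $0$. In the simplest subcase, $\partial K$ contains a piece of the wedge edge near $\zeta_0$ and $\zeta' = (s,0)$ yields the explicit estimate $h_K(\zeta') \le s \tan\theta$.

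For case (iv), I set $\zeta_0 = 0$ with supporting lines along the coordinate axes and $K$ locally in the fourth quadrant, so that $\partial K$ near the origin consists of a ``left'' arc tangent to the $y$-axis and a ``right'' arc tangent to the $x$-axis. By hypothesis at least one arc -- say the left -- has no straight initial segment, so near the origin it has the form $\{(g(|y|), y) : y \le 0\}$ with $g(0) = g'(0) = 0$ and $g(|y|) > 0$ for $y \ne 0$. If the right arc is a straight segment, I take $\zeta' = (s, 0)$ on it; the inward normal is $(0, -1)$ and the ray $(s, -t)$ meets the left arc precisely at $t = g^{-1}(s) \to 0$, so $h_K(\zeta') \to 0$. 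If the right arc is also curved, say $\zeta' = (x_0, -f(x_0))$ with $f(0) = f'(0) = 0$, the analogous computation with a slightly tilted inward normal still gives $h_K(\zeta') = O(g^{-1}(x_0)) \to 0$. The main technical point is to verify that the inward ray stays inside $K$ until it reaches the opposite arc; this follows from convexity of $K$ together with the observation that, for $s$ (or $x_0$) sufficiently small, the ray remains in a neighborhood of $\zeta_0$ where only the two arcs bound $K$.

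For case (i), pointwise positivity of $h_K(\zeta_0)$ follows from the angular-width bound: picking any outer normal $\bnu_0$ at $\zeta_0$, every other outer normal $\bnu$ there satisfies $\bnu \cdot \bnu_0 \ge \cos \Omega_K > 0$, so the ray $\zeta_0 - t\bnu_0$ respects every supporting inequality at $\zeta_0$ for $t > 0$; supporting lines at other boundary points with normals not in the cone at $\zeta_0$ are at positive distance from $\zeta_0$ and hence are inactive for small $t$, giving $h_K(\zeta_0) > 0$. A perturbation/compactness argument -- showing that for $\zeta'$ close enough to $\zeta_0$ the normal cone at $\zeta'$ contains a direction close to $\bnu_0$ and the corresponding inward ray from $\zeta'$ is $C^0$-close to the optimal ray at $\zeta_0$, so $h_K(\zeta') \ge h_K(\zeta_0)/2$ on a sufficiently small boundary neighborhood -- upgrades pointwise positivity to a uniform positive lower bound. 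Finally, case (iii) reduces to case (i) on those boundary points with $\Omega(\zeta) < \pi/2$, while at a right-angle vertex $\zeta_0$ with straight segments of lengths $\ell_1, \ell_2 > 0$ on the two sides, the triangle spanned by $\zeta_0$ and the two far endpoints of these segments lies in $K$ by convexity; for $\zeta' = (s, 0)$ on the first segment with $0 < s \le \ell_1/2$, the inward normal is perpendicular to that segment and hence parallel to the other one, so the inward ray travels a distance of at least $\ell_2/2$ inside this triangle and therefore in $K$. Combined with compactness of $\partial K$, this yields $h_K > 0$ and completes the proof.
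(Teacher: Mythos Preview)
Your outline for (ii) and (iv) matches the paper's approach in spirit (the paper also produces boundary points near the bad vertex with shrinking local depth, though it parametrizes by vertical slices rather than by arcs). The genuine gap is in case (i). Your perturbation step claims that ``for $\zeta'$ close enough to $\zeta_0$ the normal cone at $\zeta'$ contains a direction close to $\bnu_0$'', but this fails whenever $\zeta_0$ is itself a vertex of $\partial K$ (perfectly possible under $\Omega_K<\pi/2$) and your chosen optimal $\bnu_0$ lies in the \emph{interior} of the normal cone at $\zeta_0$: boundary points $\zeta'$ approaching from one side have normals accumulating only at the extreme normal $\bnu_-(\zeta_0)$ (resp.\ $\bnu_+(\zeta_0)$), never near $\bnu_0$. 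In other words, the map $\zeta\mapsto h_K(\zeta)$ is \emph{upper} semicontinuous, not lower semicontinuous, so pointwise positivity plus compactness does not by itself give a uniform positive bound, and ``$h_K(\zeta')\ge h_K(\zeta_0)/2$ on a neighborhood'' is unjustified as written.

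The paper sidesteps this entirely by arguing case (i) by contradiction in one stroke: if $h_K(\zeta_n)\to 0$ with normals $\bnu_n$, set $\omega_n:=\zeta_n-h_K(\zeta_n)\bnu_n\in\partial K$ and pick any normal $\bmu_n$ there. Since $\zeta_n-\omega_n$ is parallel to $\bnu_n$, normality of $\bmu_n$ at $\omega_n$ forces $\langle\bmu_n,\bnu_n\rangle\le 0$. Passing to subsequential limits $\zeta_n,\omega_n\to\zeta$, $\bnu_n\to\bnu$, $\bmu_n\to\bmu$, both $\bnu$ and $\bmu$ are normals at the single point $\zeta$ with angle $\ge\pi/2$ between them, contradicting $\Omega_K<\pi/2$. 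This argument needs no semicontinuity at all. Your case (iii) is a reasonable alternative to the paper's route (which proceeds via the modulus of continuity $\omega_K$ and shows $h_K\ge\max\omega_K^{-1}(\pi/2)$), but note that it inherits the same gap from case (i) on the non-vertex part of the boundary.
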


\begin{corollary}\label{cor:smooth} If $K$ is smooth--i.e. $\DK$ is a smooth convex Jordan curve--then $h_K>0$.
\end{corollary}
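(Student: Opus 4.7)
The corollary is really a one-line consequence of Proposition \ref{p:anglesmooth}, so my plan is to reduce it to case (i) of that proposition. By the very definition given in Section \ref{sec:geodef}, a convex domain $K$ is called smooth exactly when there is a unique supporting line at each boundary point, which translates to $\alpha_{-}(\zeta) = \alpha_{+}(\zeta)$ for every $\zeta \in \partial K$. Consequently the supplementary angle $\Omega(\zeta) = \alpha_{+}(\zeta) - \alpha_{-}(\zeta)$ vanishes identically on $\partial K$, so $\Omega_K = \max_{\partial K} \Omega = 0 < \pi/2$. I would then simply invoke case (i) of Proposition \ref{p:anglesmooth} to conclude $h_K > 0$.

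If one prefers a proof which does not route through the case analysis of the proposition, I would give the following direct compactness argument. At each $\zeta \in \partial K$ the unique inward normal ray $\zeta - \bnu(\zeta) \RR_{+}$ must enter $\intt K$, because the unique tangent line $\zeta + i\bnu(\zeta)\RR$ supports $K$ and hence separates $\zeta$ from $\intt K$, so moving strictly inward along the normal immediately yields an interior point. Therefore the normal chord is a non-degenerate segment $[\zeta, \zeta - h_K(\zeta)\bnu]$ with $h_K(\zeta) > 0$ at every boundary point. Smoothness makes $\bnu(\zeta)$ a continuous function on the compact curve $\partial K$, and one checks that the opposite endpoint of the normal chord depends continuously on $\zeta$ as well (because $K$ is convex and has no line segment collinear with a varying normal — the continuity of $\bnu$ rules this out). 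Hence $\zeta \mapsto h_K(\zeta)$ is continuous on $\partial K$, attains its infimum at some $\zeta^* \in \partial K$, and the infimum equals $h_K(\zeta^*) > 0$.

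There is essentially no obstacle in the corollary itself; all the substantive work is packed into Proposition \ref{p:anglesmooth}. The only step requiring a moment's care in the direct argument is the continuity of $\zeta \mapsto h_K(\zeta)$: one must rule out a jump down, which in a smooth convex domain cannot happen because a sudden drop in normal chord length would force two parallel supporting lines to come arbitrarily close without coinciding, contradicting either the smoothness (continuity of $\bnu$) or the fact that $w(K) > 0$. Either route finishes the corollary in a few lines.
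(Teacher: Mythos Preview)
Your first route is exactly how the paper treats the corollary: it is stated immediately after Proposition~\ref{p:anglesmooth} with no separate proof, relying on the observation (made right after Definition~\ref{def:OmegaK}) that smoothness is equivalent to $\Omega_K=0$, which places $K$ squarely in case~(i). Your alternative compactness argument is a valid independent route, but the paper does not give it; it is content to let the corollary fall out of the proposition.
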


\begin{proof} First note that Cases (i)--(iv) indeed give a full list of possibilities.

Let us first consider Case (i). We argue by contradiction. Take any boundary points
$\zeta_n$ with corresponding normal vectors $\bnun$ and satisfying
\begin{equation}\label{fixdepth}
\zeta_n+\bnun \RR \cap K = [\zeta_n,\zeta_n-h_n \bnun]~.
\end{equation}
with $h_n<1/n$. Let $\omega_n:=\zeta_n-h_n \bnun$ and any
corresponding outer normal vector be $\bmun$. After selecting a
subsequence, if necessary, by compactness we may assume that these
points and vectors converge. Hence let $\zeta_n\to\zeta$,
$\bnun\to \bnu$ and $\bmun\to \bmu$. Since $h_n\to 0$, it follows
that $\omega_n\to\zeta$, too.

Observe that for any point $z\in K$ normality of $\bnun$ at
$\zeta_n$ means $\Inner{\bnun}{z-\zeta_n}\le 0$, and normality of
$\bmun$ at $\omega_n$ means $\Inner{\bmun}{z-\omega_n}\le 0$,
hence by the above convergence we must have
$\Inner{\bnu}{z-\zeta}\le 0$, and also $\Inner{\bmu}{z-\zeta}\le
0$. In other words, both $\bnu$ and $\bmu$ are normal vectors at
$\zeta\in\partial K$.

However, $\zeta_n-\omega_n=h_n\bnun$ is parallel to $\bnun$, hence
normality of $\bmun$ at $\omega_n$ yields
$\Inner{\bmun}{\bnun}\le 0$. Again by continuity this entails
$\Inner{\bmu}{\bnu}\le 0$, that is, the angle between these two
normal vectors is at least $\pi/2$. Clearly then
$\Omega(\zeta)\ge\pi/2$, a contradiction to our assumption. This concludes the proof of Case (i).

Case (ii) is obvious, and even (iv) can be proved easily, but we give the proof here. So let us take a point $\zeta\in \partial K$ with the given property, and assume, as we may, that $\zeta=0$, and $\alpha_{-}(\zeta)=3\pi/2$, and $\alpha_{+}(\zeta)=0$, i.e., the two extremal half-tangents at $\zeta$ are the (positively directed) imaginary axis and the (positively directed) real axis. Also let us assume e.g. that no non-degenerate segment piece of the positive imaginary half-axis (i.e. the left half-tangent) belong to the boundary.

Put $a:=\max \{\Re z~:~z\in K\}$. It can happen that there are several points of $K$ where this maximum is attained; however, for $0$ the condition that the imaginary axis is a tangent and that no straight line piece of it belongs to $K$, implies that $K \cap i\RR = \{0\}$. Consider now any $x\in [0,a]$, and the intersection $\{\Re z =x \} \cap K$. This vertical segment has to be above (not below) the positive real axis, since $\RR$ is a tangent with $\alpha_{+}(0)=0$, and so we can write $\{\Re z =x \} \cap K = [x+ig(x),x+if(x)]$ with two continuous, nonnegative functions $0\le g \le f$ satisfying $g(0)=f(0)=0$ and $g$ convex, $f$ concave (i.e. "convex from below"), both continuous and in particular $\lim_{x\to 0+0} f(x) =0$, moreover, both $g$ and $f$ are nondecreasing in a right neighborhood of $0$.

Now let us show that at $z:=x+ig(x)$ the normal vector $\bnu$ has angle $\arg \bnu \in [3\pi/2,2\pi) $. First, $0\in K$ implies that $\langle -z,\bnu \rangle \le 0$, entailing $\arg \bnu \ge \arg (-z) + \pi/2 =\arg z + 3\pi/2 \ge 3\pi/2$. Then again, $\arg \bnu$ achieves $2\pi$ only when the boundary point is in rightmost position (i.e. of maximal real part value) in $K$, and thus $\arg \bnu \in [3\pi/2,2\pi)$ for $0<x< a$. So these mean that the normal line $z+\bnu \RR$ to $z=x+ig(x)$ intersects the boundary of $K$ at some point $z':=x'+if(x')$ with some $0 \le x'\le x$, and thus $f(x') \le f(x) \to 0$ implies that the length of intersection of $K$ and this normal line tends to 0 together with $x$. This completes the proof of $h_K=0$.

Finally, Case (iii) is again easy to prove. First, a little thought shows that the condition is equivalent to the statement that the modulus of continuity satisfies $\omega_+(0)=\pi/2$ and $\omega_K(t) = \pi/2$ for all $0\le t \le t_0$ with some positive value $0<t_0<w$.

So we prove now that if $\omega(t_0)=\pi/2$, or, more generally, if $\pi/2\in \omega(t_0)$ for some $t_0>0$, then for any boundary point $z\in\DK$ and any normal line $m:=z+\bnu\RR$ to $K$ at $z$, the length of the intersection of $m\cap K =[z,z']$ is at least $t_0$ (and so even $h_K\ge t_0 >0$).

Obviously, the chord vector between $z$ and $z'$ is in inner normal direction, whence has an angle (argument) exactly $\pi/2$ above the angle of the positively directed tangent, orthogonal to $\bnu$ at $z$.

Assume, as we may, that $z=0$, $z'=i y'$ (with some $y'>0$) and $m$ is just the imaginary axis. By definition of the width $w=w(K)$, as $w>t_0$, there exists a point $z_0=x_0+y_0\in K$ with $y_0\ge w$: if $x_0=0$, then we would have $z_0=iy_0 \in [z,z']=[0,y']$ and we would get $y'\ge w >t_0$, concluding the argument. So it remains to deal with $0<y'<w$ and $x_0 \ne 0$. Let e.g. $x_0<0$: then the three points $z ,z', z_0$ cut the boundary $\Gamma$ of $K$ into three parts, each of them extending between two of them and not containing the third one, and following in the counterclockwise direction as $z\prec z' \prec z_0$ and $\Gamma(z,z') \prec \Gamma(z',z_0) \prec \Gamma(z_0,z)$. According to these normalizations, $\alpha(z') \le \arg(z_0-z') \in (\pi/2,\pi)$, for the latter chord vector is $x_0+i(y_0-y')$ with $x_0<0$ and $y_0\ge w >y'$. It follows that for any further points $z^{*} \in \Gamma(z',z_0)$ we necessarily have $\pi\ge \al(z^{*}) \ge \al_{+}(z')$. Note that (unless $z^{*} = z'$) we cannot have $z^{*}\in m$, but only $\Re z^{*} <0$ and $\arg(z^{*}-z) > \arg(z'-z)=\pi/2$. Whence $\pi \ge \alpha(z^{*}) \ge \arg(z^{*}-z) > \pi/2$, so that $\omega_K(|z^{*}-z|)>\pi/2$. As now $z^{*}$ can be arbitrarily close to $z'$, for any value $t^{*}>|z'-z|=y'$ we have $\omega_K(t^{*})>\pi/2$, i.e., any such $t^{*}$ must satisfy $t^{*}>t_0$. So if $t^{*}>y'$ then we have $t^{*}>t_0$, whence $|z'-z|=y'\ge t_0$, and $h_K\ge t_0$, as needed.
\end{proof}

\begin{proposition}\label{p:omegaandh} For a convex, compact domain $K$ we have $h_K>0$ if and only if the modulus of continuity function has $\omega_K(\tau)\le \pi/2$ for all $0\le \tau \le t$ with some $t>0$.

Furthermore, with the above extended interpretation of the inverse function of the modulus of continuity function, we have $h_K\ge\mu_K:=\max \omega^{-1}(\pi/2)$.
\end{proposition}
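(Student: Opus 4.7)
The inequality $h_K \ge \mu_K$ is in fact exactly what the proof of Case (iii) of Proposition \ref{p:anglesmooth} established in passing: there it was shown that whenever $\pi/2 \in \omega_K(t_0)$ for some $t_0 > 0$ -- equivalently, whenever $\omega_K(\tau) \le \pi/2$ for all $\tau < t_0$ -- every chord $[\zeta,\zeta']=(\zeta+\bnu\RR)\cap K$ through a boundary point $\zeta \in \partial K$ in an arbitrary admissible normal direction $\bnu$ has length $|\zeta-\zeta'|\ge t_0$. The plan is therefore to invoke this conclusion directly with the choice $t_0 = \mu_K$ (which by its very definition belongs to $\omega^{-1}(\pi/2)$, so $\pi/2 \in \omega_K(\mu_K)$), thereby obtaining $h_K(\zeta) \ge \mu_K$ at every $\zeta \in \partial K$ and hence $h_K \ge \mu_K$.

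Granted this quantitative inequality, the ``if'' direction of the equivalence is immediate: the hypothesis $\omega_K(\tau) \le \pi/2$ on $[0,t]$ forces $\omega_-(s) = \sup_{s'<s} \omega_K(s') \le \pi/2$ for all $s \le t$, and since $\omega_+$ eventually reaches the value $\pi$ at $s=w$, monotonicity places some $s \ge t$ inside the level set $\omega^{-1}(\pi/2)$; hence $\mu_K \ge t$ and therefore $h_K \ge \mu_K > 0$.

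For the ``only if'' direction I will argue by contraposition, using Proposition \ref{p:anglesmooth}. Assume $\omega_K(\tau) > \pi/2$ for every $\tau > 0$. Then pick sequences $z_n, z_n' \in \partial K$ with $|z_n - z_n'| \to 0$ and outer normals $\bnu_n, \bnu_n'$ at mutual angular distance strictly greater than $\pi/2$. Passing to convergent subsequences $z_n, z_n' \to \zeta \in \partial K$ and $\bnu_n \to \bnu$, $\bnu_n' \to \bnu'$, the continuity of the defining inequalities for normality -- exactly as in the proof of Case (i) of Proposition \ref{p:anglesmooth} -- forces $\bnu$ and $\bnu'$ to be outer normals at the common point $\zeta$, with mutual angle $\ge \pi/2$, so $\Omega(\zeta) \ge \pi/2$. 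If $\Omega(\zeta) > \pi/2$, Case (ii) of Proposition \ref{p:anglesmooth} yields $h_K = 0$ directly. If $\Omega(\zeta) = \pi/2$, I will observe that the \emph{strict} angle gap $> \pi/2$ attained by the approximating sequence is incompatible with a right-angled vertex flanked on both sides by straight-line segments: in such a local configuration the normals at nearby boundary points coincide with one of the two fixed segment normals, whose mutual angular distance is exactly $\pi/2$, never strictly greater. Thus the straight-segment hypothesis of Case (iii) of Proposition \ref{p:anglesmooth} is ruled out at $\zeta$ and we are instead in Case (iv), which again gives $h_K = 0$.

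The one genuinely delicate step is this last exclusion -- verifying that the strict normal-angle gap $> \pi/2$ in the approximating sequences cannot coexist with the right-angled-plus-straight-sides local geometry at $\zeta$. It is a short but careful local argument, and the only ingredient in the plan that is not a direct quotation of material already established in (the proof of) Proposition \ref{p:anglesmooth}.
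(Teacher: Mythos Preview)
Your proof is correct and follows essentially the same route as the paper: both arguments reduce everything to Proposition~\ref{p:anglesmooth}, and both obtain the quantitative bound $h_K\ge\mu_K$ by directly invoking the chord-length argument carried out in the proof of Case~(iii).

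There is a minor organizational difference worth noting. For the equivalence, the paper simply translates each of the four cases of Proposition~\ref{p:anglesmooth} into a statement about $\omega_K$ (``a little thought shows that Case~(i) is equivalent to \ldots''), and the partition of cases yields the biconditional immediately. You instead derive the ``if'' direction from the quantitative inequality, and for the ``only if'' direction you run the compactness/limit argument explicitly and then exclude the straight-sided right-angle configuration using the strict gap $>\pi/2$. This last exclusion---which you correctly flag as the one genuinely delicate step---is essentially what the paper's ``a little thought'' would have to unpack; your version makes it explicit and is sound. One small imprecision: in your first paragraph, ``$\pi/2\in\omega_K(t_0)$'' is not literally \emph{equivalent} to ``$\omega_K(\tau)\le\pi/2$ for all $\tau<t_0$'' (the latter only gives $\omega_-(t_0)\le\pi/2$), but the Case~(iii) chord argument in fact uses only this weaker hypothesis, so the conclusion stands.
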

\begin{proof} Consider the four cases in the above Proposition \ref{p:anglesmooth}. A little thought shows that Case (i) is equivalent to state $\omega_K(\tau) <\pi/2$ ($0\le \tau \le t$) for sufficiently small $t>0$, and (iii) is exactly the case when $\omega_K(\tau) =\pi/2$ ($0\le \tau \le t$) for sufficiently small $t>0$. So these cases with $h_K>0$ are such that $\omega_K(\tau)\le \pi/2$ for all $0\le \tau \le t$ with some $t>0$. Moreover, Case (ii) means $\omega_K(0)>\pi/2$, and Case (iv) means that $\omega_K(0)=\pi/2$, but for all $\tau>0$ already $\omega_K(\tau)>\pi/2$, whence the cases with $h_K=0$ are the ones with $\omega_K(\tau)>\pi/2$ for all $\tau>0$. This proves the first assertion of the Proposition.

As for the last assertion, there is nothing to prove for $\mu_K=0$, so we may take $\mu_K>0$, meaning that there are points $0<t_0 \in\omega^{-1}_K(\pi)$. Recalling the last argument of the proof of the previous Proposition \ref{p:anglesmooth}, we can then see that for any such $t_0$ also $h_K\ge t_0$ holds. Taking $t_0:=\max \omega_K^{-1}(\pi/2)$ thus provides $h_K\ge \mu_K$, too.
\end{proof}

\begin{remark}
To see that the inequality
$h_K > \omega_K^{-1}(\pi/2)$ is possible, it suffices to
consider a regular hexagon $G_6$ of side length $h$, say. Then
$h_K(G_6)=2h$, while $\omega_{-}(h)=\pi/3$,
$\omega_{+}(h)=2\pi/3$ and $\omega_K^{-1}(\pi/2)=\{h\}$,
$\mu_K=h$.
\end{remark}

\subsection{Technical preparations for the investigation of $L^q(\partial K)$ norms}\label{ss:Nikolski}

First, we will prove a Nikolskii-type estimate, which is
similar to the well-known analogous inequality on the real
line, found in the book of Timan~\cite[4.9.6 (36)]{Timan}.

\begin{lemma}\label{l:Nikolskii} For any polynomial of degree at most $n$ we have that
\begin{equation}\label{eq:Nikolskii}
\|p\|_{L^q(\DK)} \ge  \left(\frac{d}{2(q+1)}\right)^{1/q} ~\|p\|_{L^\infty(\DK)} ~ n^{-2/q} .
\end{equation}
\end{lemma}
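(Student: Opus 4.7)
Set $M:=\|p\|_{L^\infty(\DK)}$, which by the maximum principle equals $\|p\|_K$, and fix $z_0\in\DK$ with $|p(z_0)|=M$. The strategy is the standard one for Nikolskii-type inequalities: first produce a Markov-type bound for $|p'|$ over all of $K$ depending \emph{only} on the diameter $d$ (with no regularity assumption on the boundary), then use it to obtain a lower bound for $|p|$ on a small arc of $\DK$ around $z_0$, then integrate the $q$th power over that arc.

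\textbf{Diameter-only Markov bound.} Fix endpoints $u,v$ of a diameter of $K$, so $|u-v|=d$. For any $z\in K$ the triangle inequality gives $\max(|z-u|,|z-v|)\ge d/2$, so there exists $z_1\in K$ with $[z,z_1]\subset K$ and $\ell:=|z-z_1|\ge d/2$. The function $r(s):=p\bigl(z+s(z_1-z)/\ell\bigr)$ is a polynomial of degree $\le n$ in $s\in[0,\ell]$ with $\|r\|_\infty\le M$ and $|r'(0)|=|p'(z)|$. The one-dimensional Markov inequality on $[0,\ell]$ then yields
\[
|p'(z)|=|r'(0)|\le\frac{2n^2}{\ell}\,M\le\frac{4n^2}{d}\,M,\qquad \forall z\in K.
\]

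\textbf{Transfer to the boundary and integration.} For any $\zeta\in K$, the segment $[z_0,\zeta]$ lies in $K$ by convexity, so integrating $p'$ along it gives
\[
|p(\zeta)|\ge M-\frac{4n^2 M}{d}\,|\zeta-z_0|.
\]
Parametrize $\DK$ by arc length with $\gamma(0)=z_0$; since a chord never exceeds the corresponding arc, $|\gamma(t)-z_0|\le|t|$ for every $t$. As the perimeter satisfies $|\DK|\ge 2d\ge d/(2n^2)$, arcs of length $d/(4n^2)$ on each side of $z_0$ sit inside $\DK$, and on them
\[
|p(\gamma(t))|\ge M\Bigl(1-\frac{4n^2|t|}{d}\Bigr)\ge 0\qquad\Bigl(|t|\le\frac{d}{4n^2}\Bigr).
\]
The substitution $u=4n^2 t/d$ then gives
\[
\|p\|_{L^q(\DK)}^q\;\ge\;2M^q\!\int_0^{d/(4n^2)}\!\!\Bigl(1-\tfrac{4n^2 t}{d}\Bigr)^{\!q}dt\;=\;\frac{M^q\,d}{2n^2(q+1)},
\]
which is exactly \eqref{eq:Nikolskii} after taking $q$th roots.

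\textbf{Main obstacle.} The whole argument rests on the first step: extracting a Markov inequality $\|p'\|_K\le(4n^2/d)\|p\|_K$ valid for \emph{every} convex domain, without any smoothness, angle, or width hypothesis on $\DK$. The ``long chord through every point'' trick—guaranteed by the existence of a diameter—handles this cleanly, after which the passage to a boundary arc is painless because Euclidean distance bounds arc length from below on any rectifiable curve; so the proof reduces, in the end, to a single explicit one-variable integral.
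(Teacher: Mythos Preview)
Your proof is correct and follows essentially the same route as the paper's: both obtain the Markov bound $\|p'\|_K\le (4n^2/d)\|p\|_K$ via the ``long chord through every point'' trick (choosing the farther endpoint of a fixed diameter), then integrate $p'$ along the chord from the maximum point $z_0$ to get $|p(\zeta)|\ge M(1-4n^2|\zeta-z_0|/d)$, and finally integrate the $q$th power over an arc of arc-length $d/(4n^2)$ on each side of $z_0$ (using chord $\le$ arc) to obtain the stated constant. The only cosmetic difference is that the paper phrases the last step via the disk $U=D(z_0,d/(4n^2))$ and the exit points $z_\pm$ of $\Gamma$ from $U$, while you check directly that the perimeter $L\ge 2d$ accommodates the two arcs; the computations and constants coincide.
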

\begin{proof} We first prove a Bernstein-Markov type estimate in the maximum norm.
Let $z\in K$ be arbitrary. Then we always have a chord $J:=[z,z^{*}] \subset K$ of length at
least $d/2$, for we can take any diameter $I$ of $K$, and for $z^{*}$ take the endpoint of $I$,
which is situated farthest from $z$--which is of course at least of distance $d/2$ from $z$.
Applying Markov's Inequality on $J$, we obtain
$$
|p'(z)| \le \dfrac{n^2}{|J|/2} \|p\|_{L^\infty(J)} \le \dfrac{4n^2}{d} \|p\|_{L^\infty(\DK)}.
$$
Therefore, it holds
\begin{equation}\label{eq:Bernstein}
\|p'\|_{L^\infty(\DK)} \le \frac{4}{d} n^2 \|p\|_{L^\infty(\DK)}.
\end{equation}

Consider now a point $z_0=\gamma(t_0)\in \DK$, where $\|p\|_\infty$ is attained. Then
-- using also convexity of $K$ -- at each point $z\in K$ we have
\begin{equation}\label{eq:Bernsteinpchange}
\begin{aligned}
|p(z)| &=\left|p(z_0)+\int_{z_0}^{z} p' (\ze)d\ze\right| \geq \|p\|_\infty- |z-z_0| \|p'\|_\infty
\\&\geq \|p\|_\infty\left(1 - \frac{4|z-z_0|}{d}n^2\right).
\end{aligned}
\end{equation}
Now, from \eqref{eq:Bernsteinpchange} we can estimate the $q$-integral of $p$ as follows. Take $r_0=d/(4n^2)$, $U:=D(z_0,r_0)$ and $\Gamma_0:=\Gamma\cap U$. More precisely, in case there are several pieces of arcs in this intersection (which can only happen for small $n$, though) then we take only one arc which passes through the point $z_0$ and extends to the circumference of $U$ in both directions -- and drop the remaining pieces. Let us denote the points, falling on the circumference $\partial U$ right preceding and following $z_0=\gamma(t_0)$ on $\gamma$ as $z_{\pm}:=\gamma(t_{\pm})$. Recalling that $\gamma$ is parameterized according to arc length, and writing for the parametrization $\gamma: [t_{-},t_{+}] \to \Gamma_0$, and so in particular $\gamma_{-}:[t_{-},t_0]\to \Gamma_{-}$ and $\gamma_{+}:[t_0,t_{+}]\to \Gamma_{+}$,
we get
\begin{align*}
\int_{\gamma_{+}} |p|^q |d\gamma| &\geq \int_{t_{0}}^{t_{+}} \left( 1 -
\frac{4|\gamma(t)-z_0|}{d}n^2 \right)^q \|p\|_\infty^q  dt
\\&\geq\|p\|_\infty^q \int_{t_{0}}^{t_{+}} \left( 1 - \frac{4|\gamma_{[t_0,t]}|}{d}n^2 \right)_{+}^q  dt
\\&=\|p\|_\infty^q \int_{t_{0}}^{t_{0}+\frac{d}{4n^2}} \left( 1 - \frac{4(t-t_0)}{d}n^2 \right)^q  dt
=\|p\|_\infty^q \frac{d}{4n^2} \int_{0}^{1} \left( 1 - s \right)^q  ds
\end{align*}
and similarly for $\gamma_{-}$, whence
$
\int_{\gamma_{0}} |p|^q |d\gamma| \geq  \|p\|_\infty^q \dfrac{d}{2(q+1)n^2} .
$
As a result, we get
$$
\|p\|_{L^q(\DK)} \ge \left(\int_{\gamma_0} |p|^q |d\gamma| \right)^{1/q}
\geq \left( \frac{d}{2(q+1)}\right)^{1/q} \|p\|_\infty  n^{-2/q},
$$
and the result follows.
\end{proof}

Next, let us define the subset $\HH:=\HH_K^q(p) \subset \DK$ the following way.
\begin{equation}\label{eq:Hsetdef}
\HH:=\HH_K^q(p):=\{\zeta\in \partial K~: ~ |p(\zeta)| > c n^{-2/q} \|p\|_\infty\},
\qquad c:=\left(8\pi(q+1) \right)^{-1/q} .
\end{equation}
Then we can restrict ourselves to the points of $\HH$
and neglect whatever happens for points belonging to
 $$
\HH^c=\Gamma \setminus \HH.
$$
Indeed, $\Gamma$ is contained in a disk of radius $d$ around any point of $K$,
 whence by the well-known property\footnote{A reference is \cite[p. 52, Property 5]{BF} about surface area,
presented as a consequence of the Cauchy Formula for surface area.} of convex curves,
$L:=|\gamma|\le 2\pi d$, and the above Lemma \ref{l:Nikolskii} furnishes
$$
\int_{\Gamma\setminus \HH} |p(z)|^q |dz| \le
  \frac{ 2\pi d c^q}{ n^{2} } \|p\|^q_\infty \le 4\pi(q+1) c^q  \|p\|_q^q =
  \frac{1}{2} \|p\|_q^q.
$$
That leads to
$$
\int_{\HH} |p|^q |d\gamma| = \int_{\gamma} |p|^q |d\gamma|  - \int_{\gamma\setminus \HH} |p|^q |d\gamma| \ge \|p\|_q^q - \frac{1}{2} \|p\|_q^q \ge \frac{1}{2} \|p\|_q^q.
$$
Therefore we can restrict to (lower) estimations of $|p'(\ze)|$ on the set $\HH$ where $p$ is assumed to be relatively large (compared to its maximum norm), so that we can assume that
$$
\log\dfrac{\|p\|_\infty}{|p(\ze)|} \le \log ( c^{-1} n^{2/q} ) =
\frac{\log (1+q)}{q} + \frac{\log\left(8\pi\right)}{q}
+ \frac2{q} \log n \le \log(16\pi) + 2 \log n .
$$
for all $q\ge 1$ and $n \ge \sqrt{16\pi}$, i.e. already for $n\ge 8$. Summing up we have

\begin{lemma}\label{l:Hlogp} Let $\HH \subset \DK$ be defined according to \eqref{eq:Hsetdef}. Then for all $p \in \PP_n$ we have
\begin{equation}\label{eq:pqintegralonH}
\int_{\HH} |p|^q \geq \frac12 \|p\|^q_{L^q(\DK)}.
\end{equation}
Furthermore, for any point $\ze \in \HH$, and for any $p\in \PK$ 
we also have
\begin{equation}\label{eq:pnormperponH}
\log \frac{\|p\|_\infty}{|p(\ze)|} \le \log(16\pi) + 2 \log n ~~
(\forall n \in \NN).
\end{equation}
\end{lemma}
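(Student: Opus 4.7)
My plan is to prove both parts of the lemma by directly exploiting the definition \eqref{eq:Hsetdef} together with Lemma \ref{l:Nikolskii} and the elementary perimeter bound for convex curves. The constant $c = (8\pi(q+1))^{-1/q}$ is not arbitrary; it has been tuned precisely so that the calculation produces the factor $1/2$ in \eqref{eq:pqintegralonH}.

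For the integral inequality \eqref{eq:pqintegralonH}, I would split $\int_\Gamma |p|^q = \int_\HH |p|^q + \int_{\Gamma\setminus\HH} |p|^q$ and show that the second term is at most $\tfrac12 \|p\|_q^q$. On $\Gamma\setminus\HH$ the pointwise bound $|p(\zeta)| \le c n^{-2/q}\|p\|_\infty$ holds by construction, so
\[
\int_{\Gamma\setminus\HH} |p|^q \,|dz| \le L \cdot c^q n^{-2} \|p\|_\infty^q,
\]
where $L = |\Gamma|$. Since $K \subset D(z_0, d)$ for any $z_0 \in K$, the standard monotonicity of perimeter under convex inclusion yields $L \le 2\pi d$. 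Combining this with the Nikolskii-type estimate from Lemma \ref{l:Nikolskii} in the rearranged form $\|p\|_\infty^q \le \frac{2(q+1)}{d} n^{2} \|p\|_q^q$ gives
\[
\int_{\Gamma\setminus\HH} |p|^q \,|dz| \le 2\pi d \cdot c^q n^{-2} \cdot \frac{2(q+1) n^{2}}{d} \|p\|_q^q = 4\pi(q+1) c^q \|p\|_q^q,
\]
and with the chosen value of $c$ this equals exactly $\tfrac12 \|p\|_q^q$. Subtracting from $\|p\|_q^q$ yields the claim.

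For the pointwise estimate \eqref{eq:pnormperponH}, the argument is purely arithmetic: taking logarithms of the defining inequality $|p(\zeta)| > c n^{-2/q}\|p\|_\infty$ produces
\[
\log\frac{\|p\|_\infty}{|p(\zeta)|} \;<\; \log(c^{-1}) + \tfrac{2}{q}\log n \;=\; \tfrac{1}{q}\log(1+q) + \tfrac{1}{q}\log(8\pi) + \tfrac{2}{q}\log n.
\]
For $q \ge 1$ the first term is at most $\log 2$ and the second at most $\log(8\pi)$, giving a sum at most $\log(16\pi)$; for $n \ge 1$ the final term is at most $2\log n$. Combining these produces the bound as stated.

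I do not anticipate any genuine obstacle here: the whole content of the lemma is really the bookkeeping of constants in the definition of $\HH$, where the choice of $c$ is made specifically so that the Nikolskii bound absorbs the perimeter factor $2\pi d$ and the $n^{-2}$ cancels against the $n^{2}$ from Nikolskii, leaving the clean constant $1/2$. The only mild subtlety is making sure the $q \ge 1$ hypothesis is actually invoked when replacing $2/q$ by $2$ in the second estimate; this should be recorded explicitly.
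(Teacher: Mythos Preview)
Your proposal is correct and follows essentially the same route as the paper: split the integral over $\Gamma$ into $\HH$ and its complement, bound the complement via $L\le 2\pi d$ and the Nikolskii estimate of Lemma~\ref{l:Nikolskii}, and for \eqref{eq:pnormperponH} take logarithms of the defining inequality of $\HH$ and bound each term using $q\ge 1$. Your treatment of the second part is in fact slightly cleaner than the paper's own discussion, since you verify directly that $\frac{\log(1+q)}{q}\le\log 2$ and $\frac{2}{q}\log n\le 2\log n$ hold for all $q\ge 1$ and $n\ge 1$, whereas the text preceding the lemma unnecessarily restricts to $n\ge 8$.
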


For relatively small values of the degree $n$ we may get better
constants if using an entirely different argument, not suitable
to analyze the order regarding the degree, but yielding better
numerical values for small $n$. We will base our calculation on the following
classical result of Gabriel \cite[Theorem 5.1]{Gabriel}.

\begin{olemma}{\bf(Gabriel).}\label{Glemma}
If $\Gamma$ is any convex (closed) curve and $C$ any convex
curve inside $\Gamma$, and if $F(Z)$ is regular inside and on
$\Gamma,$
$$
\int_C |F^\lambda(Z)| |dZ| \le (\pi(e+1)+e)\int_\Gamma |F^\lambda(Z)| |dZ| ,
\quad ( \lambda\ge 0).
$$
\end{olemma}

With this we can now state the next.

\begin{lemma}\label{l:Gabrielyield}
For any point $\ze \in \partial K$ on the boundary of the
convex domain $K$ and $q\ge 1$ we have the estimate
\begin{equation}\label{eq:pzetaGabriel}
|p(\zeta)| \le d(\pi (\pi(e+1)+e))^{1/q} \|p'\|_q.
\end{equation}
As a direct consequence, we also have
\begin{equation}\label{eq:qtoqGabriel}
\|p'\|_q > \frac{1}{45.3} ~
\frac{1}{d} ~ \|p\|_q > 0.022 ~ \frac{1}{d} ~ \|p\|_q.
\end{equation}
\end{lemma}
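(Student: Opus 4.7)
\emph{Proof plan.} The argument combines three ingredients: the existence of a zero of $p$ in $K$, H\"older's inequality along a chord, and Gabriel's Lemma (Lemma \ref{Glemma}) to transfer the integral of $|p'|^q$ from that chord to $\Gamma=\partial K$.

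\emph{Step 1 (pointwise bound).} Since $p\in\PK$ has $\deg p=n\ge 1$, it admits a zero $z_0\in K$. By convexity the segment $[z_0,\zeta]$ lies in $K$, and $p(z_0)=0$ yields $p(\zeta)=\int_{[z_0,\zeta]}p'(w)\,dw$. H\"older's inequality then gives
\[
|p(\zeta)|\le|\zeta-z_0|^{1-1/q}\left(\int_{[z_0,\zeta]}|p'(w)|^q\,|dw|\right)^{1/q}.
\]
Next apply Gabriel's Lemma (Lemma \ref{Glemma}) with $F=p'$ (entire), $\lambda=q$, and $C$ the chord $[z_0,\zeta]$ treated as a degenerate convex curve inside $\Gamma$. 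The degenerate case is handled by approximation: take thin convex neighbourhoods $C_{\varepsilon}$ of $[z_0,\zeta]$ that lie strictly inside $\Gamma$, and let them collapse to the chord; if $\zeta\in\partial K$ forces $C_\varepsilon$ to touch $\Gamma$, first perturb $\zeta$ slightly into $\intt K$ and use the continuity of $p,p'$ at the end. The outcome is
\[
\int_{[z_0,\zeta]}|p'|^q\,|dw|\le(\pi(e+1)+e)\,\|p'\|_q^q,
\]
and using $|\zeta-z_0|\le d$,
\[
|p(\zeta)|^q\le d^{q-1}(\pi(e+1)+e)\,\|p'\|_q^q,
\]
which is the pointwise estimate \eqref{eq:pzetaGabriel}.

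\emph{Step 2 (integrated consequence).} Raising the pointwise bound to the $q$-th power and integrating in $\zeta\in\Gamma$:
\[
\|p\|_q^q=\int_\Gamma|p(\zeta)|^q\,|d\zeta|\le d^{q-1}(\pi(e+1)+e)\,\|p'\|_q^q\cdot L.
\]
Invoke the classical Cauchy-formula bound for the perimeter of a convex set of diameter $d$, namely $L\le\pi d$ (since $L=\tfrac12\int_0^{2\pi}\width_\theta(K)\,d\theta\le\tfrac12\cdot 2\pi\cdot d=\pi d$). This yields
\[
\|p\|_q^q\le d^q\,\pi(\pi(e+1)+e)\,\|p'\|_q^q.
\]
Since $\pi(\pi(e+1)+e)\approx 45.24<45.3$ and $(\pi(\pi(e+1)+e))^{1/q}\le\pi(\pi(e+1)+e)$ for all $q\ge 1$, taking $q$-th roots gives $\|p\|_q\le 45.3\,d\,\|p'\|_q$, equivalent to \eqref{eq:qtoqGabriel}.

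\emph{Main obstacle.} The principal technical point is the application of Gabriel's Lemma to the degenerate convex ``curve'' $[z_0,\zeta]$: one has to rigorously justify the limiting argument with thin convex neighbourhoods of the segment that remain strictly inside $\Gamma$, while handling the fact that $\zeta$ already lies on $\partial K$ and so the approximating curves would otherwise touch the boundary of $K$.
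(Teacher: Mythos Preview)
Your proof is correct and follows essentially the same route as the paper: pick a zero $z_0\in K$, apply H\"older along $[z_0,\zeta]$, invoke Gabriel's Lemma on the degenerate convex curve, then integrate over $\Gamma$. The only cosmetic difference is a rebalancing of constants---the paper keeps the factor $\tfrac12$ from the doubled segment $[z_0,\zeta]\cup[\zeta,z_0]$ and uses the cruder $L\le 2\pi d$, whereas you drop that $\tfrac12$ but use the sharper Cauchy-formula bound $L\le \pi d$; the two choices cancel and yield the identical constant $\pi(\pi(e+1)+e)<45.3$.
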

\begin{proof} Evidently, it is enough to consider the case $1<q<\infty$.
Let $z_0\in K$ be any zero of $p$. Then for any $\zeta \in
\Gamma:=\DK$ the interval $[z_0,\zeta]\subset K$ (by
convexity), and so H\"older's inequality furnishes
\begin{align*}
|p(\zeta)|=\left|
\int_{[z_0,\zeta]} p'(z)dz\right|
\le \left(\int_{[z_0,\zeta]} |dz|\right)^{(q-1)/q}
\left(\int_{[z_0,\zeta]} |p'(z)|^q|dz|\right)^{1/q}.
\end{align*}
Noting that $|\zeta-z_0|\le d$ and applying Lemma~\ref{Glemma}
to the integral of $|p'(z)|^q$ over the convex curve
$C:=[z_0,\zeta] \cup [\zeta,z_0]$ (the degenerate closed convex
curve encircling around the points $\ze$ and $z_0$) we get
\begin{align*}
|p(\zeta)|& \le  d^{(q-1)/q}  \left(\frac12 (\pi(e+1)+e) \int_{\Gamma} |
p'(z)|^q|dz|\right)^{1/q}
\\&= d^{(q-1)/q}\left(\frac{\pi(e+1)+e}{2}\right)^{1/q} \|p'\|_q,
\end{align*}
proving \eqref{eq:pzetaGabriel}.

Now integrating on $q$-th power, and using again $L\le 2\pi d$ leads to
$$
\|p\|_q^q \le d^{q-1} ~\frac{\pi(e+1)+e}{2} \|p'\|_q^q ~ L \le
d^q~\pi (\pi(e+1)+e) \|p'\|_q^q < 45.3 ~d^q~\|p'\|_q^q.
$$
Taking $q$-th root, a small rearrangement finally furnishes even the last
assertion, as from here
$$
\|p'\|_q > \left(\frac{1}{45.3}\right)^{1/q}
\frac{1}{d} ~ \|p\|_q \ge \frac{1}{45.3} ~
\frac{1}{d} ~ \|p\|_q > 0.022 ~ \frac{1}{d} ~ \|p\|_q.
$$
\end{proof}

\subsection{Proof of a local result in terms of the local depth}\label{s:localdepthestimate}

In the following we will work out an unconditional pointwise estimate in
the sense that it will provide an estimate locally at points
$\ze \in \HH$ in terms of $h=h(\ze,K)$, not using the
assumption that $h_K=\inf_{\ze\in \DK} h(\ze,K)$ stays positive
or not. When this happens to hold, the below result will almost
immediately imply Theorem \ref{th:posdepth}.

\begin{theorem}\label{th:localdepth} Let $p\in\PK$ and let $\HH=\HH_K^q(p)$ be defined by \eqref{eq:Hsetdef}. Then it holds
\begin{equation}\label{eq:ptwisenh}
|p'(\ze)| \ge  \frac{h^4}{1500 d^5}~n~|p(\ze)|  \quad \left( ~{\rm if}~~\ze \in \HH
\right).
\end{equation}
\end{theorem}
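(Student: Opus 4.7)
The plan is to combine the Turán identity $p'(\zeta)/p(\zeta) = \sum_j 1/(\zeta - z_j)$ with Chebyshev's Lemma \ref{l:capacity} applied to the depth chord at $\zeta$, using the log-estimate of Lemma \ref{l:Hlogp} to pass from the global hypothesis $\zeta \in \HH$ to a concrete product constraint on the roots. First I normalize: choose an outer normal $\bnu$ at $\zeta$ that realizes the local depth $h = h_K(\zeta)$, then translate and rotate so that $\zeta = 0$, $\bnu = -i$, the depth chord $\ell = [0,ih]$ lies in $K$, and $K$ sits in the upper half-plane $\{\Im z \ge 0\}$. Each root $z_j = x_j + iy_j$ then has $y_j \ge 0$ and $|z_j| \le d$.

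The key geometric object is the inscribed tangent disk $D_0 := D(ih/2, h/2)$, which passes through $\zeta$ and has $\ell$ as its vertical diameter. A direct computation gives
\begin{equation*}
\Re\!\left(\frac{\zeta - ih/2}{\zeta - z_j}\right) = \frac{h\,y_j}{2|z_j|^2} \ge 0
\end{equation*}
for every $j$, with the improved lower bound $\ge 1/2$ precisely when $z_j \in D_0$ (the classical Turán Lemma \ref{Tlemma} specialized to $D_0$). Projecting $p'(\zeta)/p(\zeta)$ onto the direction from $\zeta$ to $ih/2$ and summing only over the roots inside $D_0$, I obtain
\begin{equation*}
\left|\frac{p'(\zeta)}{p(\zeta)}\right| \ge \frac{m}{h}, \qquad m := |\{j : z_j \in D_0\}|.
\end{equation*}

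Next, Chebyshev's Lemma \ref{l:capacity} on $\ell$ produces a point $w^\ast \in \ell$ with $|p(w^\ast)| \ge 2|a_n|(h/4)^n$. Combining this with $|p(w^\ast)| \le \|p\|_\infty$, the factorization $|p(\zeta)| = |a_n|\prod_j|z_j|$, and the $\HH$-estimate \eqref{eq:pnormperponH} of Lemma \ref{l:Hlogp}, one arrives at the log-sum bound
\begin{equation*}
\sum_{j=1}^n \log\frac{d}{|z_j|} \le n\log\frac{4d}{h} + 2\log n + \log(8\pi),
\end{equation*}
which shows that the roots cannot be uniformly clustered close to $\zeta$.

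The main obstacle is to translate this log-sum constraint into a lower bound $m \gtrsim n (h/d)^5$ on the size of the central set. The plan is a dichotomy argument: peripheral roots ($z_j \notin D_0$) are either close to $\zeta$---in which case they contribute heavily to the log-sum budget, capping their number---or they lie in the region $K \setminus D_0$ away from $\zeta$, whose geometric extent (and in particular, its angular extent from $\zeta$) is controlled by the depth $h$ and the diameter $d$. Additional applications of Chebyshev's inequality on chords other than $\ell$ cap the number of far peripheral roots, and balancing the threshold separating the near from the far peripherals forces $m$ to be large enough. The exponents $h^4$ and $d^5$ in the final bound emerge from this balance, while the numerical constant $1/1500$ results from careful bookkeeping of all intermediate estimates.
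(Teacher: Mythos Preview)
Your setup (normalization, the Tur\'an identity, invoking Chebyshev and Lemma~\ref{l:Hlogp}) matches the paper's, but the argument breaks down at the step you label the ``main obstacle.'' Applying Chebyshev to the \emph{full} depth chord $\ell=[0,ih]$ yields only the constraint $\sum_j\log(d/|z_j|)\le n\log(4d/h)+O(\log n)$, and this is too weak to force any zeros into your disk $D_0$. Indeed, the budget $n\log(4d/h)$ is of order $n$, so the bound is essentially vacuous: configurations with all zeros lying on (or just above) the tangent line at moderate distance from $\zeta$ satisfy your log--sum inequality comfortably while giving $m=0$. Your ``dichotomy'' sketch does not repair this---the region $K\setminus D_0$ at distances $\gtrsim h$ from $\zeta$ has full angular extent $[0,\pi]$, so there is no geometric control of the kind you claim, and no amount of ``additional Chebyshev on other chords'' is specified.

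The paper's proof avoids this by two coupled changes. First, it partitions zeros by an \emph{angular sector} $\{\varphi_j\in[\varphi,\pi-\varphi]\}$ with $\sin\varphi=h/(8d)$ rather than by the small disk $D_0$; this gives the weaker per-zero gain $|p'/p|\ge \frac{h}{8d^2}\,k$ with $k$ the count of ``steep'' zeros, but in exchange the complementary ``flat'' zeros $\Z_-$ all satisfy $0\le\Im z_j\le h/8$. Second---and this is the crucial idea you are missing---Chebyshev is applied not to $\ell$ but to the subsegment $J=[\tfrac34 ih,\,ih]$ \emph{away from} $\zeta$. For any $z_j\in\Z_-$ and $\tau\in J$ one then has the explicit ratio estimate
\[
\frac{|z_j-\tau|^2}{|z_j|^2}\ \ge\ 1+\frac{3h^2}{8d^2-6h^2},
\]
i.e.\ flat zeros are strictly \emph{farther} from $J$ than from $\zeta$, so each one pushes $|p(z_0)/p(\zeta)|$ up by a factor $>1$ at the Chebyshev point $z_0\in J$. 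Combined with the Chebyshev lower bound on $J$ for the steep zeros, this forces $|p(z_0)/p(\zeta)|\ge\exp\bigl(\tfrac{n}{2}\log\frac{8u^2-3}{8u^2-6}-\psi(u)k\bigr)$, and now the $\HH$ upper bound \eqref{eq:pnormperponH} genuinely squeezes $k$ from below. Your full-chord Chebyshev cannot produce this leverage because the maximum on $\ell$ may well be attained near $\zeta$ itself, where the product ratio carries no information.
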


\begin{proof}
First of all, we may assume that
$$
n\ge n_0:=32 ~ d^4/h^4,
$$
for values of $n$ not exceeding this bound we can settle the issue
referring to \eqref{eq:qtoqGabriel} of Lemma
\ref{l:Gabrielyield} providing
\begin{equation}\label{nsmall1500}
\|p'\|_q \ge \frac{n}{32d^4/h^4} \|p'\|_q  > \frac{1}{1500} ~\frac{h^4}{d^5} ~n~\|p\|_q.
\end{equation}

So from here on let us assume $n\ge 32 ~d^4/h^4$. Without loss
of generality we also assume $\zeta=0$ and that a tangent line
at $\zeta=0$, chosen according to the requirement
$h=h(\ze,K)(\ge h_K)$, is just the real line $\RR$ (and thus
the normal line is the imaginary axis $i\RR$). Then we have
$K\subset \HP:=\{{z\in\CC} \ : \ \Im z \ge 0\}$ and $K\cap
i\RR=[0,ih]$.

Let us denote the set of zeroes of $p\in\PK$ as
 $$
\Z:=\{z_j=r_je^{i\varphi_j}\ : \ j=1,\dots,n\}\subset K
$$
(listed with possible repetitions according to their multiplicity). We assume, as we may, that $p(z)=\prod_{j=1}^n(z-z_j)$.

In what follows we will use for any interval $[\sigma,\theta]$ (and similarly for $[\sigma,\theta)$ etc.) the following notations for the angular sectors, the zeroes in the angular sectors, and the number of zeroes in the angular sectors:
\begin{align*}
S[\sigma,\theta]&:=\{z\in\CC~:~ \arg z \in [\sigma,\theta]\}, \notag \\
\Z[\sigma,\theta]&:=\{z_j\in\Z~:~ \arg z_j \in [\sigma,\theta]\} = \Z\cap S[\sigma,\theta], \\
n[\sigma,\theta]&:=\# \Z[\sigma,\theta]. \notag
\end{align*}

Let us fix the angle
\begin{equation*}
\varphi:=\arcsin \left(\frac{h}{8d} \right)~.
\end{equation*}

We partition the zero set $\Z$ into two subsets as follows.
\begin{equation*}
\Z_{+}:=\Z[\varphi,\pi-\varphi]\,\qquad \Z_{-}:=\Z\setminus \Z_{+}~.
\end{equation*}
For the corresponding cardinals we write
\begin{equation*}
k:= \# \Z_{+}=n[\varphi,\pi-\varphi],\qquad m:=\# \Z_{-}=n[0,\varphi)+n(\pi-\varphi,\pi]~.
\end{equation*}
Observe that for any subset $\W\subset\Z$ we have
\begin{equation}\label{Impartsum}
\left| \frac{p'}{p}(0) \right| \ge -\Im {\frac{p'}{p}(0)} =
\sum_{j=1}^n \Im \frac {-1}{z_j} \ge \sum_{z_j\in\W} \Im \frac
{-1}{z_j}=\sum_{z_j\in\W} \frac {\sin \varphi_j}{r_j} \,,
\end{equation}
because all terms in the full sum are nonnegative. We apply inequality \eqref{Impartsum} with $\W=\Z_{+}$ to obtain
\begin{equation}\label{upperpart}
M := \left| \frac{p'}{p}(0) \right| \ge \sum_{z_j\in\\Z_{+}} \frac {\sin \varphi_j}{r_j} \ge {\sin\varphi}\sum_{z_j\in\\Z_{+}}\frac1{r_j} \ge
\frac{h}{8d^2} k
\,,
\end{equation}
since for $z_j\in\Z_{+}$ we have $\varphi_j\in [\varphi,\pi-\varphi]$,
$\sin \varphi_j \ge \sin\varphi=h/(8d)$, and $r_j\le d$.

Now put
\begin{equation*}
J:= \left[\frac 34 ih, ih \right]\subset K ~.
\end{equation*}
We estimate the distance of any $z_j=x_j+iy_j\in\Z_{-}$ from $J$.
In fact, taking any point $z=x+iy=re^{i\psi}\in D_d(0)\cap
(S[0,\varphi)\cup S(\pi-\varphi,\pi])$, we necessarily have
$|z|^2=x^2+y^2\le d^2$, \ $0\le y\le d \sin \varphi = h/8$, and therefore,
$\dist (z,J)=|z-i3h/4|=\sqrt{x^2+(y-3h/4)^2}$.
Clearly, then
\begin{equation}\label{distquotient}
\frac{\dist(z,J)^2}{|z|^2}=\frac{x^2+y^2-3yh/2+(3h/4)^2}{x^2+y^2}\ge
1+\frac{3h^2}{8d^2}~.
\end{equation}


In fact, we can do a little better here, taking into account
that the other endpoint of $J$, $ih$, also belongs to $K$,
whence the diameter provides an upper bound to $|z-ih|$, too,
yielding $x^2+(y-h)^2 \le d^2$. Using this in the middle of
\eqref{distquotient}, we may write
\begin{align*}
\frac{\dist(z,J)^2}{|z|^2}
&=\frac{x^2+y^2-3yh/2+(3h/4)^2}{x^2+y^2}
\\&=
1+\frac{9h^2-24yh}{16(x^2+y^2)} \ge
1+\frac{9h^2-24yh}{16(d^2-h^2+2yh)}~,
\end{align*}
where the last expression is decreasing in $y\le h/8$, whence admitting
$$
\frac{9h^2-24yh}{16(d^2-h^2+2yh)} \ge
\frac{9h^2-24(h/8)h}{16(d^2-h^2+2(h/8)h)} = \frac{3h^2}{8d^2-6h^2}.
$$
Introducing the parameter
$u:=d/h \in [1,\infty)$ we thus obtain
\begin{equation*}
\frac{\dist(z,J)^2}{|z|^2} \ge 1 + 
\frac{3h^2}{8d^2-6h^2} = \frac{8u^2-3}{8u^2-6} \qquad \left( u:=\dfrac{d}{h} \in [1,\infty)~\right).
\end{equation*}
From here taking logarithms we get
\begin{equation}\label{eq:zjtaufracu}
\left|\frac{z_j-\tau}{z_j} \right| \ge \exp\left( \frac12 \log\left( \frac{8u^2-3}{8u^2-6} \right) \right) \qquad \left( u:=\frac{d}{h} \in [1,\infty)~\right) ~.
\end{equation}

Next consider the set $R:=K \cap S[\varphi,\pi-\varphi]$. Applying Lemma \ref{l:capacity} to $R$
and $J \subset R$ we are led to
\begin{equation}\label{capacitylower}
\begin{aligned}
&\max_{z\in J} \prod_{z_j\in\Z_{+}}\left|\frac{z_j-z}{z_j}\right|
\ge \frac{1}{d^k} \max_{z\in J} \prod_{z_j\in\Z_{+}}|z_j-z| \ge \frac{1}{d^k}
\left(\frac{|J|}{4} \right)^k
\\
& =  
\exp\left(-k \log\left( \frac{16d}{h} \right) \right).
\end{aligned}
\end{equation}
Taking now the point $z_0\in J$ where this maximum (i.e. the
maximum of $|p(z)|$ on $J$) is attained, combining
\eqref{eq:zjtaufracu} and \eqref{capacitylower} and using $m+k=n$ leads to
\begin{align*}
\left| \frac{p(z_0)}{p(0)} \right| &= \prod_{z_j\in\Z}\left|\frac{z_j-z_0}{z_j}\right|
\ge \exp\left( m \frac12 \log\left( \frac{8u^2-3}{8u^2-6} \right) - k \log\left( 16u \right) \right) 
\\ & = 
\exp\left(n\frac{1}2 \log\left( \frac{8u^2-3}{8u^2-6} \right) - \left\{\frac12 \log\left( \frac{8u^2-3}{8u^2-6} \right) + \log\left( 16u \right) \right\} k \right)
\\ & =\exp\left(\frac{n}2 \log\left( \frac{8u^2-3}{8u^2-6} \right) - \psi(u) k \right),  \notag
\end{align*}
where
$$
\psi(u):=\frac12 \log\left( \frac{8u^2-3}{8u^2-6}\right)+\log(16u) .
$$


Taking logarithm, dividing by $\psi(u)$ and rearranging thus provides
\begin{align*}
k &\ge \frac{1}{\psi(u)}\left( \frac{n}2 \log\left( \frac{8u^2-3}{8u^2-6}\right) - \log \left| \frac{p(z_0)}{p(0)} \right|\right)
\\&\ge \frac{1}{\psi(u)}\left(  \frac{n}2 \log\left( \frac{8u^2-3}{8u^2-6} \right) - \log \frac{\|p\|_\infty}{|p(0)|}\right),
\end{align*}
which, when combining with \eqref{upperpart} yields
\begin{equation*}
8 \frac{d^5}{h^4} M \ge  u^3 k \ge \frac{u^3}{ \psi(u)}  ~
\left(\frac{n}2 \log\left( \frac{8u^2-3}{8u^2-6} \right) -\log \frac{\|p\|_\infty}{|p(0)|}\right).
\end{equation*}
We have already seen in Lemma \ref{l:Hlogp} why it suffices to
restrict to points of the set $\HH$. So from here on we will
consider only points $\zeta \in \HH$, for which points we may
invoke \eqref{eq:pnormperponH} to get
\begin{equation*}
8 \frac{d^5}{h^4} M \ge \frac{u^3}{ \psi(u)}  ~
\left(\frac{n}2 \log\left( \frac{8u^2-3}{8u^2-6} \right) -\log(16\pi)-2 \log n  \right)
\quad \left( ~{\rm for}~\ze \in \HH \right).
\end{equation*}

Introducing another parameter $v:=n/u^4=(h/d)^4 n$ and collecting everything from the above, a little rearrangement leads to
\begin{align*}
& 8 \frac{d^5}{h^4}~M ~\frac{1}{n} \ge
\frac{u^3 \log\left( \frac{8u^2-3}{8u^2-6} \right) - \dfrac2{v u}\left\{\log(16\pi) +2 \log v + 8\log u\right\}}
{\log\left(\frac{8u^2-3}{8u^2-6} \right)+ 2\log(16u)}
\end{align*}
It is clear that for $v\ge e$ this expression is an increasing function of
$v$, therefore we can as well write in the minimal possible value $v\ge v_0=32$ to get
$$
8 \frac{d^5}{h^4}~\frac{M}{n}  \ge
\frac{u^3 \log\left( \frac{8u^2-3}{8u^2-6} \right) - \frac{2}{32u}\left\{\log(\pi) +14 \log 2 + 8\log u\right\}}
{\log\left(\frac{8u^2-3}{8u^2-6} \right)+ 2\log(16u)}
\quad \left( u:=\frac{d}{h} \ge 1 \right).
$$
Here simultaneously dividing the numerator and the denominator by $u$ yields 
\begin{align}\label{eq:Mpernest}
8 \frac{d^5}{h^4}~\frac{M}{n}  & \ge
\frac{u^2 \log\left( \frac{8u^2-3}{8u^2-6} \right) - \frac{1}{16u^2}\left\{\log\pi+ 14\log 2 + 8\log u\right\}}
{\frac{1}{u} \log\left(\frac{8u^2-3}{8u^2-6}\right) + 2 \frac{\log(16u)}{u}}
\quad \left( u:=\frac{d}{h} \ge 1 \right).
\end{align}
As $1/u$, $\log\left(\frac{8u^2-3}{8u^2-6}\right)$ and $2\frac{\log(16u)}{u} =32 \frac{\log(16u)}{16u} $ are all decreasing for $u\ge 1$, the denominator is a decreasing function and its maximal value is at the point $u=1$. So,
$$
\frac 1u\log\left(\frac{8u^2-3}{8u^2-6} \right)+ 2\frac{\log(16u)}{u} \le \log(5/2)+2\log(16) = \log(640) .
$$
From this and \eqref{eq:Mpernest} a computation provides
\begin{equation}\label{e414}
\begin{aligned}
 \frac{d^5}{h^4}~\frac{M}{n}  & \ge
\frac{1}{64 \log 640} \left(8u^2 \log\left( \frac{8u^2-3}{8u^2-6} \right) - \frac{\log(4\pi) + 4\log(8u^2)}{2u^2} \right)
\\& = \frac{1}{413.5339\dots} \left( 8u^2 \log\left( \frac{1-\frac{3}{8u^2}}{1-\frac{6}{8u^2}} \right) - \frac{4\log(4\pi)}{8u^2} - 16\frac{\log(8u^2)}{8u^2} \right)
\\& \ge \frac{1}{414} f(t)
\end{aligned}
\end{equation}
with $\displaystyle f(t):= \frac{1}{t}
\log\left( \frac{1-3t}{1-6t} \right) - 4\log(4\pi)~t + 16t\log t$
and $\displaystyle t:=\frac{1}{8u^2} \in (0,1/8]$.
It is easy to see that $f(t)$  is a convex function. Indeed, $t\log t$ is convex (with second derivative $1/t>0$),
the linear term is of course convex, and the first part can be developed into a totally positive Taylor-Maclaurin series:
$
\frac{1}{t} \log\left( \frac{1-3t}{1-6t} \right) = \sum_{k=1}^\infty \frac{6^k-3^k}{k} t^{k-1} $.

Numerical evidence shows that $f(t)$ attains its minimal value somewhere around $0.0786\dots$ , and it stays above 0.7 all over $(0,1/8]$. To establish a sufficiently good lower estimation of the function all over the interval $(0,1/8]$, we will use convexity simply in the form of a supporting line argument: with any fixed value $\tau$ in $(0,1/8]$ the tangent of $f$ at $(\tau,f(\tau))$ is a supporting line (from below) to $f$, i.e. $f(t) \ge L(t):=L(\tau;t):= f(\tau) + f'(\tau) (t-\tau)$.

A computation furnishes
$$
f'(t)=-\frac{1}{t^2} \log\left( \frac{1-3t}{1-6t} \right) + \frac{1}{t}
\left(\frac{-3}{1-3t} + \frac{6}{1-6t} \right) - 4\log(4\pi) + 16 + 16\log t.
$$
So now let us take $\tau:=0.078628$, say.
Then $f(\tau)\approx 0.700037\ldots > 0.70003$, and another numerical computation furnishes
$f'(\tau) \approx -0.000321\ldots > - 0.0004$. Since $f'(\tau)<0$, we find
$f(t) \ge \min_{(0,1/8]} L =L(0.125) > 0.70003 - 0.0004 \cdot (0.125-0.07) = 0.700008 > 0.7$.


Substituting this estimate in~\eqref{e414}, we conclude
$$
M \ge \frac{0.7}{414}\frac{h^4}{d^5}n>\frac{1}{600}\frac{h^4}{d^5}n.
$$

\end{proof}

\subsection{Conclusion of the proof for fixed positive depth}\label{ss:posdepthpr}


\begin{proof}[Proof of Theorem \ref{th:posdepth}]
We will use for all points $\ze\in \HH$ the estimate
\eqref{eq:ptwisenh} complemented by the lower estimation $h\ge
h_K$.
$$
\|p'\|_q^q \ge \int_\HH |p'|^q \ge \left(\frac{1}{1500} \frac{h_K^4}{d^5}\right)^q \int_\HH |p|^q
\ge \left(\frac{1}{1500} \frac{h_K^4}{d^5}\right)^q \frac12 \|p\|_q^q.
$$
Taking $q^{\rm th}$ root and estimating $2^{1/q}$ simply by 2
yields Theorem \ref{th:posdepth}.
\end{proof}

\section{Upper estimation of the oscillation order of convex domains}\label{sec:osciupper}

Given the results for maximum norm and the above results of Theorem \ref{th:Lqcircular} and Theorem \ref{th:posdepth}, it is in order to clarify if the linear growth with $n$ is indeed the maximal possible order of oscillation in $L^q$ norms. That is settled by the next result.

\begin{theorem} Let $K\Subset \CC$ be any compact, connected--not necessarily convex--domain, bounded by a finite or countable number of closed, rectifiable Jordan curves $\Gamma_j$ ($j=1,\dots)$ with finite total arc length $\sum_{j} |\Gamma_j| = L <\infty$. Then for any $n\in \NN$ there exists some polynomial $p\in \PK$ with $\|p'\|_{L^q(\DK)} \le C(K) n \|p\|_{L^q(\DK)}$.
\end{theorem}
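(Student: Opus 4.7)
The plan is to exhibit an explicit candidate polynomial, in the spirit of Remark \ref{rem:sharp} but adapted to the integral setting. The crucial observation is that any compact connected \emph{domain} $K$ (with the stated boundary structure) has non-empty interior, so I can pick an interior point $z_0 \in \intt K$ with
$$\rho := \dist(z_0, \DK) > 0.$$
I would then set
$$p(z) := (z - z_0)^n \in \PK,$$
which has $n$ roots (with multiplicity) concentrated at $z_0 \in K$ and derivative $p'(z) = n(z-z_0)^{n-1}$.

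Unlike in the $L^\infty$ case of Remark \ref{rem:sharp}, where one may afford to place $z_0$ on $\DK$ (at an endpoint of a diameter) because only the \emph{maximum} of $|p|$ matters, here I must stay strictly inside $K$ in order to secure a uniform lower bound $|z-z_0| \ge \rho$ over the \emph{entire} boundary. That uniform lower bound is the only ingredient needed, because it converts the identity $p'(z)/p(z) = n/(z-z_0)$ into the pointwise estimate
$$|p'(z)| = \frac{n}{|z-z_0|}\,|p(z)| \le \frac{n}{\rho}\,|p(z)| \qquad (z \in \DK).$$
Raising to the $q$-th power and integrating with respect to arc length over the (finite, by hypothesis) rectifiable family $\DK = \bigcup_j \Gamma_j$ gives
$$\|p'\|_{L^q(\DK)} \le \frac{n}{\rho}\,\|p\|_{L^q(\DK)},$$
and both sides are finite because $L = \sum_j |\Gamma_j| < \infty$. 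So the theorem holds with $C(K) := 1/\rho$; optimising the choice of $z_0$ one may take $z_0$ to be an incenter of $K$, so that $C(K) = 1/r_{\mathrm{in}}(K)$.

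There is in fact no serious obstacle. Convexity, smoothness, connectedness of $\DK$, and the precise number of boundary components are all irrelevant: the role of the countable-boundary/rectifiability hypothesis is only to guarantee that the two $L^q$ integrals are well-defined and finite, while the role of $K$ being a \emph{domain} (and not merely an arbitrary compact connected set) is only to guarantee that $\intt K \ne \emptyset$, and hence $\rho > 0$. If one had tried instead $z_0 \in \DK$, the pointwise ratio $|p'(z)|/|p(z)| = n/|z-z_0|$ would blow up near $z_0$ and a direct pointwise comparison would fail; one would then have to split $\DK$ near and far from $z_0$ and compare the two contributions, which is possible but unnecessary here.
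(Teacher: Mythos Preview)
Your proof is correct and is genuinely simpler than the paper's. The paper does exactly what you describe at the end as the ``unnecessary'' alternative: it places the root at a boundary point (an endpoint $0$ of a diameter $[0,d]$), takes $p(z)=z^n$, and then must split $\DK$ into the annulus $\{d/2\le |z|\le d\}$, where the pointwise ratio $|p'/p|=n/|z|\le 2n/d$ is available, and the disk $\{|z|\le d/2\}$, where $|p'|$ is bounded crudely by $n\,2^{-(n-1)}d^{n-1}$. To control the contribution of the small-$|z|$ piece relative to $\|p\|_q$, the paper also needs a lower bound on $\|p\|_q$, obtained by parameterizing by arc length near the \emph{other} endpoint $d\in\DK$. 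The final constant is $C(K)=\dfrac{2\bigl(L+\min(d,\ell)\bigr)}{d\,\min(d,\ell)}$, where $\ell$ is the length of the boundary component through $d$; for convex $K$ this collapses to $C(K)<15/d$.

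The trade-off is clear. Your argument is a two-line pointwise comparison and needs no case analysis, no lower bound on $\|p\|_q$, and no information about the boundary components; the price is that your constant $C(K)=1/r_{\mathrm{in}}(K)$ blows up for thin domains (a long thin rectangle, say), whereas the paper's constant for convex $K$ depends only on the diameter. Since the paper explicitly remarks that it is not pursuing sharp constants here, your route is perfectly adequate for the stated theorem, and arguably the natural first proof.
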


Note that the rectifiable assumption is necessary to have finite $L^q$ norms, for otherwise most polynomials have infinite $L^q$ norms on the boundary. However, apart from this assumption, the domain $K$ is quite general, including nonconvex, multiply connected domains. For disconnected domains, the analysis may be done separately for connected components, and for compact sets without an interior even a lower order of oscillation is possible, as it has been shown at least for the interval $\II$. Therefore, we may be satisfied with the degree of generality of the above formulated assertion.

\begin{proof} We will provide a simple example. Let $J\subset K$ be any diameter, and chose an endpoint of the diameter $J$. Without loss of generality we may assume that this endpoint is just the origin $0$, and we can as well assume that $J=[0,d]$. Then our polynomial will simply be $p(z):=z^n$.

At points, where $|z|\le d/2$, we have $|p(z)| \le (d/2)^n = 2^{-n} d^n = 2^{-n} \|p\|_K$, and $|p'(z)|= n |z^{n-1}| \le n2^{-(n-1)} d^{n-1}$. On the other hand, for points in the ring domain $R:=\{ z\in \CC~:~ d/2 \le |z| \le d\}$ and belonging to $K$ we have $\left|\frac{p'}{p}(z)\right|=\left|{n}/{z}\right|<2n/d$. So  we can write
\begin{align}\label{znprimeandzn}
\|p'\|_q^q &:=\int_\Gamma |p'|^q \le (2n 2^{-n} d^{n-1} )^q L + \int_{\Gamma \cap R} \left( \frac{2n}{d} |p| \right)^q \notag
\\ & \le \left( \frac{2n}{d} \right)^q \left\{ \left( 2^{-n} \frac{\|p\|_\infty}{\|p\|_q}\right)^q  L +1 \right\}\|p\|_q^q.
\end{align}
By construction, the point $d$ on the other end of the diameter $J$ sits in $\DK$, and belongs to some of the boundary curves $\Gamma_j$, which boundary curve must have some positive length $|\Gamma_j|=\ell>0$, say. So parameterizing by arc length and starting the parametrization at the point $d$, we can write $\gamma_j:[0,\ell]
\to \Gamma_j$ with $\gamma_j(0)=d=\gamma_j(\ell)$, and obviously for any parameter value $0\le t \le d$
$|\gamma_j(t)| \ge d - t$, since the arc $\gamma_j\big|_{[0,t]}$ cannot go farther from the left endpoint at $d$ then its arc length $t$. It follows that at $z=\gamma_j(t)$ it holds $|p(z)|\ge (d - t)^n$ until $0\le t \le \lambda:=\min(d,\ell)$, and so we have
\begin{align*}
\|p\|_q^q& \ge \int_{\Gamma_j} |p(z)|^q |d(z)| \ge \int_0^{\lambda} (d-t)^{nq} dt =
\frac{1}{nq+1} \left[ d^{nq+1} - (d-\lambda)^{nq+1}\right]
\\
& \ge \frac{d\left[ d^{nq} - (d-\lambda)^{nq}\right]}{nq+1}
\end{align*}
and
$$
\left( \frac{\|p\|_\infty}{\|p\|_q}\right)^{q} \le \frac{nq+1}{d}  \left( \frac{1}{1-(1-\lambda/d)^{nq}} \right) < \frac{nq+1}{d}  \left( \frac{1}{1-(1-\lambda/d)} \right) = \frac{nq+1}{\lambda}
$$
Finally applying this in \eqref{znprimeandzn} leads to
$$
\|p'\|_q^q \le \left( \frac{2n}{d}\right)^{q} \left\{\frac{nq+1}{\lambda 2^{nq}} L +1 \right\} \|p\|_q^q.
$$
Since $q\ge 1$ and $nq+1 \ge 2$, it suffices to observe that $x/2^x$ decreases for $x\ge 2$ and thus $(nq+1)2^{-nq} \le 2 \max_{x\ge 2} x2^{-x} = 1$. We finally obtain
\begin{equation}\label{explicitconstforupper}
\|p'\|_q \le \frac{2}{d}\frac{L+\min(d,\ell)}{\min(d,\ell)} n \|p\|_q.
\end{equation}
As here the constant depends only on the domain $K$, we conclude that $\|p'\|_q \le C(K) n \|p\|_q$ holds for the chosen polynomial $p\in \PK$, whence the assertion.
\end{proof}

\begin{remark} In case of a convex domain $K$, the parameters occurring here in $C(K)$ have a simpler meaning. First, the boundary $\DK$ is connected (consists of only one convex curve), and thus $\ell=L$ and $\min(d,\ell)=\min(d,L)=d$. Second, as we have used several times, \emph{for convex curves} the estimate $L \le 2\pi d$ holds true, always, whence \eqref{explicitconstforupper} simplifies to $\|p'\|_q \le \dfrac{4\pi+2}{d} n \|p\|_q < \dfrac{15}{d} n \|p\|_q $, say.
\end{remark}

\begin{remark} In \cite{Rev2} a more precise value of the constant $C(K)$ has also been obtained for the case of the infinity norm. As for $L^q$-norm neither the order (in general), nor the more exact constants are known, it seemed to be well ahead of time to bother with sharper values of the constant $C(K)$ here. Nevertheless, our feeling is that the slightly more involved construction of \cite{Rev2} would indeed provide a better constant, which may be sharp, apart from an absolute constant factor, like in case $q=\infty$. Here we do not pursue this issue any more.
\end{remark}

\section{Concluding remarks}\label{sec:conclusion}

Above we have seen, that like in case of the maximum norm, also
for the $L^q(\DK)$ norm any compact convex domain $K$ admits
polynomials $p\in \PK$ with oscillation not exceeding $O(n)$.
On the other hand we have shown for some classes of convex
domains that the order of oscillation indeed reaches $c_K n$.

A natural question--quite resembling to the question posed by Er\H od 77 years ago in case of the maximum norm--is to identify those domains which indeed admit order $n$ oscillation even in $L^q(\DK)$ norm.

It has been clarified that, like in case of the maximum norm,
also for $L^q$ norms the interval $\II$ behaves differently:
there the order of oscillation may be as low as
$\sqrt{n}$. Therefore, it is certainly necessary that
\emph{some} conditions are assumed for an order $n$
oscillation. The question is if apart from having a nonempty
interior, is there need for any additional assumption? We
think that probably not.

\begin{conjecture} For all compact convex domains $K\Subset \CC$ there exist $c_K>0$ such that for any $p\in\PK$ we have $\|p'\|_{L^q(\DK)} \ge c_K n \|p\|_{L^q(\DK)}$.
\end{conjecture}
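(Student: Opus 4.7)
The plan is to reduce the general convex domain case to the already-handled case $h_K>0$ of Theorem \ref{th:posdepth} by isolating the finitely many exceptional boundary points. Since the jumps of $\al$ on $\DK$ sum to at most its total variation $2\pi$, the set $V:=\{v\in\DK:\Omega(v)\ge\pi/2\}$ of acute-angle boundary points contains at most four elements, and by Proposition \ref{p:anglesmooth} these, together with the Case (iv) curved $\pi/2$-corners, are the only points that can force $h_K=0$. I would fix a small scale $\eta=\eta(K)>0$ and split $\DK=G_\eta\cup B_\eta$, where $B_\eta$ is the union of the subarcs within distance $\eta$ of $V$ and $G_\eta$ is the complement, then estimate $\|p'\|_q^q$ separately on the two pieces.

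On $G_\eta$ the local depth $h_K(\zeta)$ is bounded below by some $h_0=h_0(K,\eta)>0$: this follows from the compactness argument in the proof of Case (i) of Proposition \ref{p:anglesmooth}, since the normal segment $[\zeta,\zeta-h\bnu]\cap K$ can only collapse to a point as $\zeta$ approaches $V$. Theorem \ref{th:localdepth} then gives the pointwise estimate $|p'(\zeta)|\ge c(K,\eta) n|p(\zeta)|$ at every $\zeta\in\HH\cap G_\eta$, and using Lemma \ref{l:Hlogp} to discard the complement of $\HH$, integration yields a controlled contribution from $G_\eta$, sufficient whenever a positive fraction of $\|p\|_q^q$ lives there. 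If instead most of the $L^q$-mass lives on $B_\eta$, one must work harder.

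The hard part is therefore to handle the bad arcs $B_\eta$, on which the above pointwise estimate degenerates. At each $v\in V$ itself the Tur\'an disk argument still applies, since $K$ is bounded and convex: there is a disk $D_{R(K)}\supset K$ through $v$, so $|p'(v)|\ge (n/2R(K))|p(v)|$. My plan for extending this to a whole neighborhood of $v$ is to redo the Chebyshev-Faber analysis of Theorem \ref{th:localdepth} with the projection carried out not along the local normal $\bnu(\zeta)$, which yields vanishing depth near $v$, but along a fixed direction pointing toward a deep interior point $z^*\in K$ chosen with $\dist(z^*,\DK)$ comparable to $w_K$. In such a direction $K$ always extends by some uniform amount, and the Chebyshev-Faber step on the projected segment should go through with constants depending only on $K$. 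The main obstacle is the delicate balance $\sin\varphi=h/(8d)$ in the original argument, which was tuned to make the Chebyshev estimate on the ``good'' cone $S[\varphi,\pi-\varphi]$ just dominate the blow-up in the ``bad'' cone: once the projection direction is no longer normal, this trade-off must be reworked taking into account the anisotropy of the tangent cone of $K$ at $v$, and in the Case (iv) situation where the adjacent boundary is curved rather than polygonal, an additional invocation of Gabriel's Lemma \ref{Glemma} will probably be needed to control near-tangential contributions uniformly in $n$. If these bad-arc estimates can be made with constants depending only on $K$, the conjecture follows by combining them with the good-arc bound via the decomposition $\|p\|_q^q=\int_{G_\eta}|p|^q+\int_{B_\eta}|p|^q$.
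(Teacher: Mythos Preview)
This statement is presented in the paper as a \emph{conjecture}, not a theorem: the authors give no proof and write explicitly in Section~\ref{sec:conclusion} that ``we are not really close to this conjecture,'' noting that for general convex domains ``really nothing---not even some mere $\log n$ e.g.---has been proved to date.'' They also remark earlier that the triangle ``turns to be an entirely different case, the treatment of which requires further ideas, too.'' So there is no paper proof to compare against; any valid argument here would resolve an open problem.

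Your proposal is not such an argument. The decomposition $\DK=G_\eta\cup B_\eta$ and the observation that Theorem~\ref{th:localdepth} controls $G_\eta\cap\HH$ are sound, but the entire difficulty of the conjecture is concentrated in the part you label ``the hard part'' and then leave as a plan. Concretely: for $\zeta$ on a straight edge near an acute vertex $v$, zeros lying on that same edge satisfy $\Im(-1/z_j)=0$, so they contribute nothing to the lower bound \eqref{Impartsum}; in the paper's scheme such zeros are handled only through the Chebyshev step on the normal segment $J$, whose length is exactly $h_K(\zeta)\to 0$. Replacing $J$ by a segment toward a fixed interior point $z^*$ does not obviously repair this, because the ratio $|z_j-\tau|/|z_j|$ for $\tau\in J'$ and collinear $z_j$ is $1+O(|\tau|/|z_j|)$ with a coefficient governed by the angle between $J'$ and the edge---and that angle tends to the half-angle of the acute corner, not to $\pi/2$, so the gain per zero is only a fixed constant rather than the $1+3h^2/(8d^2)$ of \eqref{distquotient}. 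The balance against the loss $\log(16d/h)$ from the ``good'' zeros then has to be redone from scratch, and you yourself flag this (``this trade-off must be reworked'') without carrying it out. The appeal to Gabriel's Lemma~\ref{Glemma} for Case~(iv) corners is likewise only a hope. In short, the proposal correctly localizes the obstruction but does not overcome it; what you have written is a reasonable research outline, not a proof.
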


We are not really close to this conjecture. Let us point out that a general estimate--however weak--is still missing in the full generality of all convex compact domains. Apart from the cases discussed here, we mentioned that the case of the interval $\II$ is clarified--there the oscillation being of order $\sqrt{n}$. So clearly also here there is a difference between various compact convex sets. However, we do not really know if $\II$ is indeed to be ``the worst", i.e. of lowest possible oscillation, as for general domains really nothing--not even some mere $\log n$ e.g.--has been proved to date.

Therefore, posing a more modest goal, we would be interested as well \emph{in any estimate} working for \emph{general compact convex domains}, without further assumptions on the geometrical features of it.

\section*{Acknowledgments}
This work was supported by the Russian Foundation for Basic Research (Project No. 15-01-02705)
 and by the Program for State Support of Leading Universities of the Russian Federation (Agreement No. 02.A03.21.0006 of August 27, 2013)
and by Hungarian Science Foundation Grant \#'s K-100461,  NK-104183, K-109789.


\end{document}